\theoremstyle{plain} 
\newtheorem{theorem}{Theorem}[section]
\newtheorem{lemma}[theorem]{Lemma}
\newtheorem{corollary}[theorem]{Corollary}
\theoremstyle{definition} 
\newtheorem{definition}[theorem]{Definition}
\newtheorem{remark}[theorem]{Remark}
\newtheorem{example}[theorem]{Example}
\newcommand{\Rcal}{\mathcal{R}}
\newcommand{\Z}{\mathbb{Z}}
\newcommand{\R}{\mathbb{R}}
\newcommand{\C}{\mathbb{C}}
\newcommand{\N}{\mathbb{N}}
\newcommand{\re}{\operatorname{Re}}
\def\skp#1#2{\langle {#1}, {#2} \rangle}
\def\Dds {\frac{D}{d s}}
\begin{document}
\begin{titlepage}

\begin{title}
{On Surfaces that are Intrinsically Surfaces of Revolution}
\end{title}

\author
{Daniel Freese}
\address{Daniel Freese\\Department of Mathematics\\
Liberty University\\
Lynchburg, VA 24515
\\USA}
\author{
Matthias Weber
}
\address{Matthias Weber\\Department of Mathematics\\Indiana University\\
Bloomington, IN 47405
\\USA}
\thanks{This work was partially supported by a grant from the Simons Foundation (246039 to Matthias Weber)
and by a grant from the NSF (1461061 to Daniel Freese)}

\subjclass[2010]{Primary 53C43; Secondary 53C45}
\date{\today}
\maketitle

\begin{abstract}
We consider surfaces in Euclidean space parametrized on an annular domain such that the first fundamental form and the principal curvatures are  rotationally invariant, and the principal curvature directions only depend on the angle of rotation (but not the radius).
Such surfaces generalize the Enneper surface.  We show that they are necessarily of constant mean curvature, and that the rotational speed of the principal curvature directions is constant. We classify the minimal case. The (non-zero) constant mean curvature case has been classified by Smyth.
\end{abstract}

\end{titlepage}
\section{Introduction}

The Enneper surface (see Figure \ref{fig:enneper}) is given in conformal polar coordinates as

\[
f(u,v)=\frac{1}{6} e^u 
\begin{pmatrix}
3 \cos (v)-e^{2 u} \cos (3 v) \\
 -3 \sin (v)-e^{2 u} \sin (3 v) \\
 3 e^u \cos (2 v) \ .
 \end{pmatrix}
 \]

\begin{figure}[H] 
   \centering
   \includegraphics[width=3in]{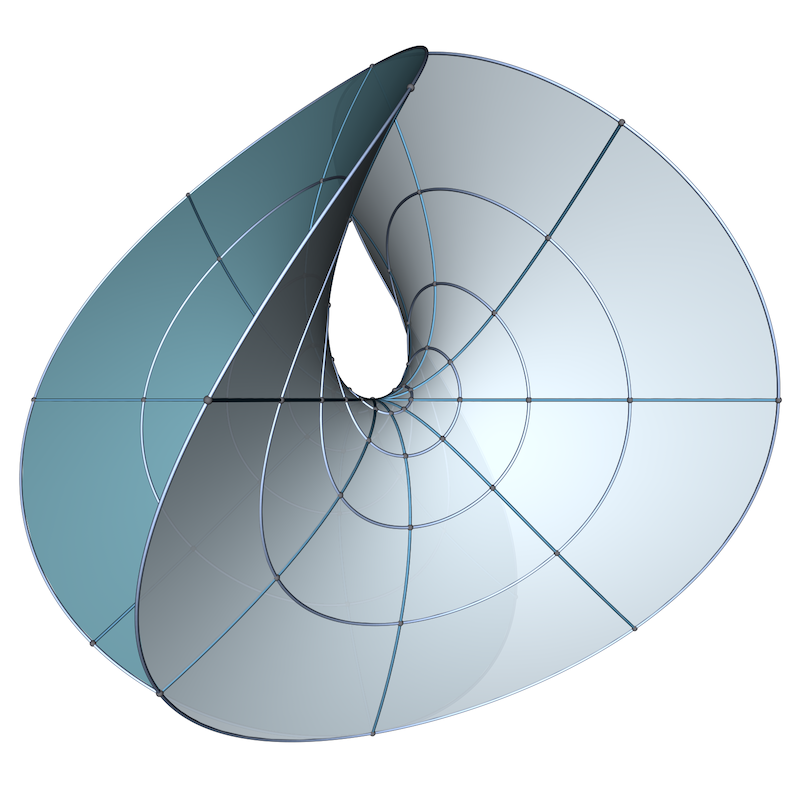} 
   \caption{The Enneper Surface}
   \label{fig:enneper}
\end{figure}

It was discovered in 1871 by Alfred Enneper \cite{enn2}. Its first fundamental form is given by

\[
I = \frac{1}{4} e^{2 u}
   \left(1+e^{2 u}\right)^2
   \begin{pmatrix} 1 &
   0 \\
 0 & 1
\end{pmatrix} \ .
\]
This means that the Enneper surface  is \emph{intrinsically} a surface of revolution (but obviously not extrinsically).

\begin{definition}\label{def:rho}
An intrinsic surface of revolution is a surface with first fundamental form of the shape
\[
I = I_\rho = \rho(u)^2
\begin{pmatrix} 1 &
   0 \\
 0 & 1
\end{pmatrix}\ ,
\]
where $\rho(u)$ is a positive function.
\end{definition}

Of course any surface of revolution is also intrinsically a surface of revolution.

The shape operator of the Enneper surface is also rather special:

\[
S = 
\frac4{\left(1+e^{2 u}\right)^2}
\begin{pmatrix}  
\cos (2 v) & -\sin (2 v) \\
 -\sin (2 v) & -\cos (2 v)
 \end{pmatrix}
=
R^{-v}
\begin{pmatrix}
 \frac{4}{\left(1+e^{2 u}\right)^2} & 0 \\
 0 & -\frac{4}{\left(1+e^{2u}\right)^2}
\end{pmatrix}
R^{v}
\]

where 
\[
R^v = 
\left(
\begin{array}{ll}
 \cos (v) & -\sin (v) \\
 \sin (v) & \cos (v)
\end{array}
\right)
\]
is the counterclockwise rotation by $v$. This is in contrast to the shape operator of a surface of revolution which always
takes diagonal form in polar coordinates. It is, however, rather special, because the principal curvature directions rotate with constant speed independent of $u$ and the principal curvatures are independent of $v$. 

\begin{figure}[H] 
   \centering
   \includegraphics[width=3in]{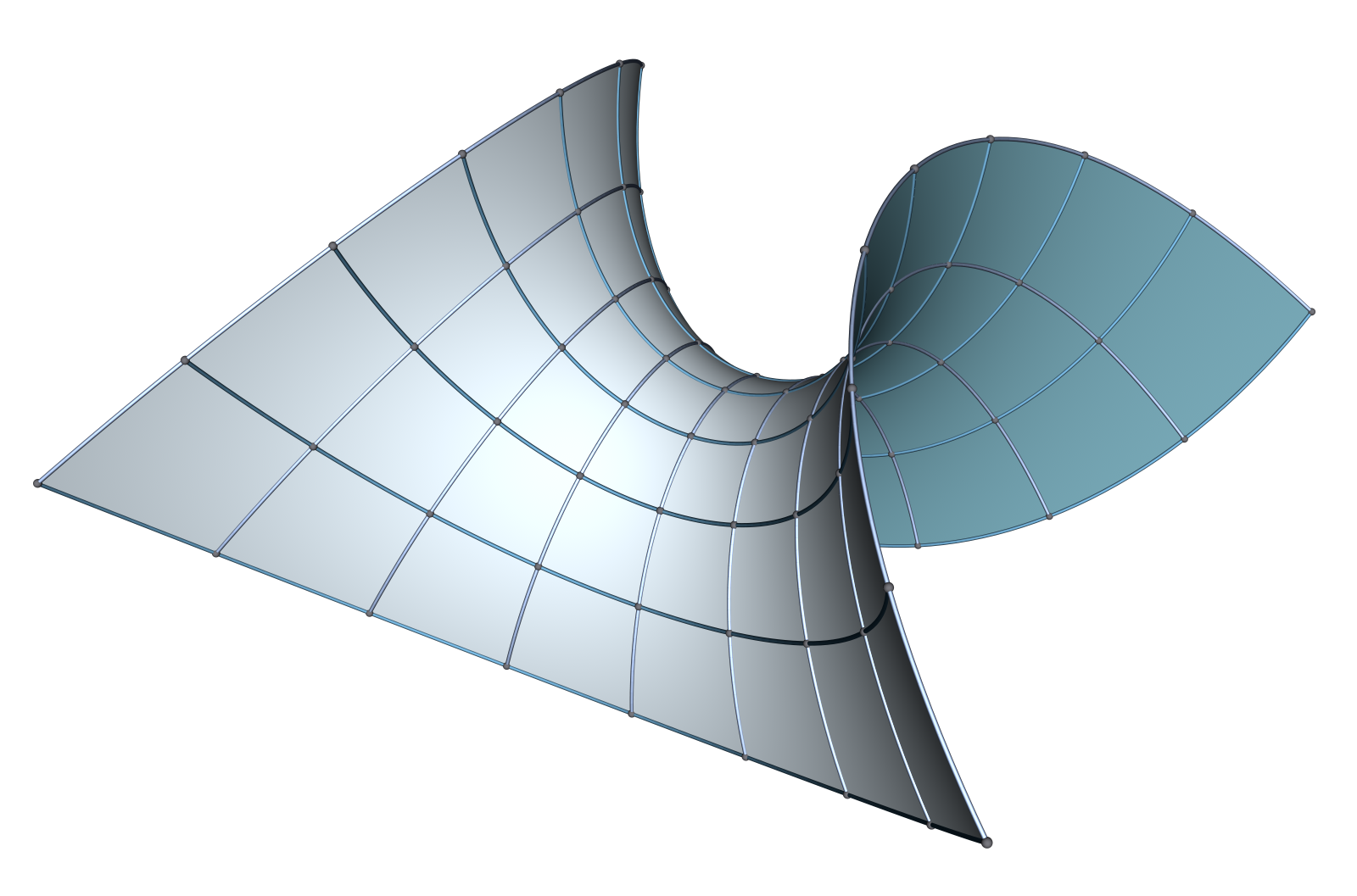} 
   \caption{The Enneper Surface with curvature lines}
   \label{fig:ennepercurvature}
\end{figure}

We are generalizing this property of the Enneper surface by introducing the following concept:

\begin{definition}\label{def:shape}
Let $\alpha:\R \to \R$ be a $C^1$-function. 
We say a surface has twist $\alpha$ if its shape operator is of the form
\begin{equation}\label{eqn:shapedef}
S = R^{-\alpha(v)}
\begin{pmatrix}
\lambda_1(u) & 0 \\
 0 & \lambda_2(u) 
\end{pmatrix}
R^{\alpha(v)}
 \ .
\end{equation}
\end{definition}

Note that this precisely means that the principal curvature directions are independent of  $u$, and the principal curvatures are independent of $v$. 

In summary, the Enneper surface is an example of an intrinsic surface of revolution with twist $\alpha(v)\equiv v$.
A standard surface of revolution, on the other hand, has twist $\alpha(v)\equiv 0$.

Now we can formulate our main theorem, which is a consequence of the Codazzi equations. We note that we generally assume our surfaces to be three times continuously differentiable.

\begin{theorem}\label{thm:cmc}
Let $\Sigma$ be  an intrinsic surface of revolution with twist function $\alpha$. 
Assume that $\alpha$ is not identically equal to 0 or any other integral multiple of $\pi/2,$ on any open interval. Assume furthermore that the surface has no open set of umbilic points. Then $\Sigma$ has constant mean curvature, and the twist function is linear $\alpha(v)=a v$.
\end{theorem}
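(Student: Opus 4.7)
The approach is to extract the necessary information from the Codazzi--Mainardi equations applied to the shape operator of Definition~\ref{def:shape} in the intrinsically rotational coordinates of Definition~\ref{def:rho}. Since $\rho=\rho(u)$ depends only on $u$, the Christoffel symbols of the metric $\rho^{2}(du^{2}+dv^{2})$ simplify greatly, and writing the second fundamental form as $h_{ij}=\rho^{2}S_{ij}$ the two independent Codazzi equations reduce to
\begin{align*}
\partial_{v}S_{11}-\partial_{u}S_{12} &= 2\frac{\rho'}{\rho}\,S_{12},\\
\partial_{u}S_{22}-\partial_{v}S_{12} &= \frac{\rho'}{\rho}\,(S_{11}-S_{22}).
\end{align*}

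The key computation is to expand the conjugation in Definition~\ref{def:shape} in terms of the sum $\Sigma(u)=\lambda_{1}(u)+\lambda_{2}(u)=2H$ and the difference $\Delta(u)=\lambda_{1}(u)-\lambda_{2}(u)$, obtaining
\[
S_{11}=\tfrac{\Sigma}{2}+\tfrac{\Delta}{2}\cos 2\alpha,\qquad S_{22}=\tfrac{\Sigma}{2}-\tfrac{\Delta}{2}\cos 2\alpha,\qquad S_{12}=-\tfrac{\Delta}{2}\sin 2\alpha.
\]
Substituting these into the first Codazzi equation and cancelling the common factor $\sin 2\alpha$ produces the separation-of-variables identity
\[
\Delta(u)\,\alpha'(v)=\tfrac{1}{2}\Delta'(u)+\tfrac{\rho'(u)}{\rho(u)}\,\Delta(u).
\]
The right-hand side depends only on $u$; since $\Delta\not\equiv 0$ by the no-umbilic-open-set hypothesis, dividing by $\Delta$ on any open $u$-interval where $\Delta\neq 0$ forces $\alpha'(v)$ to equal a fixed real constant $a$. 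Hence $\alpha(v)=av+b$, and the additive constant $b$ can be absorbed into the rotational ambiguity of the $v$-parameter.

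Inserting this information into the second Codazzi equation produces a $u$-only term $\tfrac{1}{2}\Sigma'(u)$ together with a term carrying a factor $\cos 2\alpha$ whose coefficient coincides with an expression already shown to vanish from the first equation. What remains is $\Sigma'(u)\equiv 0$, which means the mean curvature $H=\Sigma/2$ is constant.

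The main technical obstacle is not the algebra but handling the degenerate loci where one is not allowed to divide. The twist hypothesis, together with continuity of $\alpha$, ensures that $\{v:\sin 2\alpha(v)\neq 0\}$ is open and dense in $\R$; the umbilic hypothesis ensures that $\{u:\Delta(u)\neq 0\}$ is open and dense. Consequently the two identities derived above hold on an open dense subset of the parameter domain, and the $C^{1}$-regularity of $\alpha$, $\Sigma$, and $\Delta$ extends the constancy of $\alpha'(v)$ and of $\Sigma(u)$ to the entire domain by continuity.
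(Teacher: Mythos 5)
Your proof is correct, and it reaches the theorem by a genuinely different route from the paper's. You use the classical coordinate form of the Codazzi--Mainardi equations for $h_{ij}=\rho^2 S_{ij}$, whereas the paper computes covariant derivatives of the shape operator in the orthonormal frame $U=\rho^{-1}\partial_u$, $V=\rho^{-1}\partial_v$ and projects the single vector equation $(D_US)V=(D_VS)U$ onto $R^{-\alpha}U$ and $R^{-\alpha}V$. These are two different splittings of the same two-dimensional system: writing $E=\tfrac12\Delta'+\tfrac{\rho'}{\rho}\Delta-\Delta\alpha'$, the paper's pair is $\Sigma'\sin\alpha\cos\alpha=0$ and $E=\tfrac12\Sigma'\cos 2\alpha$, while yours is $E\sin 2\alpha=0$ and $\tfrac12\Sigma'=E\cos 2\alpha$; one checks these are pointwise equivalent. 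As a consequence your deductions come in the opposite order: the paper obtains $H=\mathrm{const}$ immediately from its first equation, using only the twist hypothesis and continuity, and then needs the no-umbilic hypothesis to extract $\alpha'=\mathrm{const}$ from the second; you obtain $\alpha'=\mathrm{const}$ first (using both hypotheses in the separation-of-variables step) and then $H=\mathrm{const}$ falls out of the second equation with no further density argument, which is arguably cleaner. One point worth stating explicitly in your last paragraph: after cancelling $\sin 2\alpha$ on the dense set where it is nonzero, the identity $\Delta\alpha'=\tfrac12\Delta'+\tfrac{\rho'}{\rho}\Delta$ extends by continuity in $v$ to the \emph{entire} domain (all quantities involved are continuous), and it is this everywhere-valid identity that annihilates the $\cos 2\alpha$ coefficient in the second equation at every point, including where $\Delta$ vanishes. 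Finally, like the paper's proof, your argument literally yields $\alpha(v)=av+b$; absorbing $b$ by a shift of $v$ (which preserves $I_\rho$) is fine when $a\neq 0$.
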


Constant mean curvature surfaces that are intrinsic surfaces of revolution have been studied by Smyth, see \cite{smy1}.
Thus our result complements Smyth's result by replacing his assumption about constant mean curvature with a geometric assumption.

In order to begin a complete classification, we will invoke Bonnet's theorem \cite{doc1} to prove:

\begin{theorem}\label{thm:ode}
Let $\Sigma$ be  an intrinsic surface of revolution of constant mean curvature $H=\lambda_1+\lambda_2$,  first fundamental form $I_\rho$ for $\rho:(u_1,u_2)\to\R^{>0}$, and  
linear twist $\alpha(v)=a v$. Then $\rho$ satisfies the differential equation
\begin{equation}\label{eqn:rhoeqn1}
\rho '(u)^2-\rho (u) \rho ''(u)=\frac{1}{4} H^2 \rho (u)^4-b^2
   e^{4 a u} 
\end{equation}
for a constant $b$.

Vice versa, given $H$, and $\alpha(v)=a v$, a constant $b$ and $\rho$ satisfying Equation (\ref{eqn:rhoeqn1}),
define
\[
\lambda_{1,2}(u)= \frac12 H \pm b \frac{e^{2a u}}{\rho(u)^2} \ .
\]
Then the first fundamental form $I_\rho$ and the shape operator $S$ given by Equation (\ref{eqn:shapedef})  satisfy the Gauss- and Codazzi equations and thus define an intrinsic surface of revolution with constant twist $\alpha(v)=av$ and constant mean curvature $H$.
\end{theorem}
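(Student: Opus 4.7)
The plan is to extract the ODE from the Codazzi equations followed by the Gauss equation, all expressed in the conformal coordinates supplied by $I_\rho$. By hypothesis $H$ is constant and $\alpha(v)=av$ is linear, so the only unknowns in the shape operator are $\rho(u)$ and $\mu(u):=\lambda_1(u)-\lambda_2(u)$.

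First I would expand $R^{-av}\operatorname{diag}(\lambda_1,\lambda_2)R^{av}$ via the double-angle identities and multiply by $\rho^2$ to form the covariant second fundamental form:
\[
h_{11}=\tfrac{\rho^2}{2}(H+\mu\cos 2av),\qquad h_{12}=-\tfrac{\rho^2}{2}\mu\sin 2av,\qquad h_{22}=\tfrac{\rho^2}{2}(H-\mu\cos 2av).
\]
The Christoffel symbols of $\rho(u)^2(du^2+dv^2)$ are elementary: the only nonvanishing ones are $\Gamma^1_{11}=\Gamma^2_{12}=-\Gamma^1_{22}=\rho'/\rho$. Substituting them into the Codazzi identities $\nabla_i h_{jk}=\nabla_j h_{ik}$, one obtains the two scalar equations $\partial_u h_{12}=\partial_v h_{11}$ and $\partial_u h_{22}-\partial_v h_{12}=\rho\rho' H$ (the latter using $h_{11}+h_{22}=\rho^2 H$).

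Plugging in the explicit $h_{ij}$, both Codazzi equations reduce, by non-vanishing of $\sin 2av$ and $\cos 2av$, to the single first-order linear ODE $(\rho^2\mu)'=2a\,\rho^2\mu$. Integration gives $\rho^2\mu=2b\,e^{2au}$ for a constant $b$, whence $\lambda_{1,2}=\tfrac{H}{2}\pm b\,e^{2au}/\rho^2$, as claimed. The ODE \eqref{eqn:rhoeqn1} then falls out of the Gauss equation: in conformal coordinates $K=-(\log\rho)''/\rho^2=(\rho'^2-\rho\rho'')/\rho^4$, while the formula for the $\lambda_i$ gives $K=\lambda_1\lambda_2=H^2/4-b^2e^{4au}/\rho^4$. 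Equating and clearing $\rho^4$ yields \eqref{eqn:rhoeqn1}.

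For the converse I would simply run the above in reverse: starting from $\rho,H,a,b$ with $\rho$ solving \eqref{eqn:rhoeqn1}, define $\lambda_{1,2}$ as stated, build $h$ from the same formulas, and observe that each step becomes a verification rather than a derivation (the Gauss equation holds because \eqref{eqn:rhoeqn1} is imposed, and the Codazzi equations hold because $\rho^2\mu$ has the prescribed exponential form). Bonnet's theorem then produces an immersion of a simply connected subdomain of $(u_1,u_2)\times\R$ into $\R^3$ realizing $I_\rho$ and $h$, unique up to rigid motion; by construction this surface has twist $\alpha(v)=av$ and constant mean curvature $H$. The main obstacle is bookkeeping in the Codazzi step: the rotation by $\alpha(v)$ mixes the coordinate entries of $h$ with $v$-dependent coefficients, and one must check that the $u$- and $v$-dependencies separate cleanly so that a single ODE in $u$ emerges.
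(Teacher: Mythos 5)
Your proof is correct, and its logical skeleton is exactly the paper's: reduce the Codazzi equations to the single linear ODE $(\rho^2\mu)'=2a\,\rho^2\mu$ for $\mu=\lambda_1-\lambda_2$, integrate to get $\lambda_{1,2}=\tfrac12 H\pm b\,e^{2au}/\rho^2$, substitute into the Gauss equation $K=\lambda_1\lambda_2$ with $K=(\rho'^2-\rho\rho'')/\rho^4$ to obtain Equation (\ref{eqn:rhoeqn1}), and invoke Bonnet for the converse. The one genuine difference is how you arrive at the Codazzi equations: you work with the coordinate components $h_{ij}$ of the second fundamental form, expand the conjugated shape operator by double-angle identities, and use the Christoffel symbols of the conformal metric, whereas the paper computes $(D_US)V=(D_VS)U$ in the orthonormal frame $U,V$ via covariant derivatives of the rotation $R^{\alpha}$ and the eigenvalue endomorphism $\Lambda$. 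Your coordinate computation is more elementary and self-contained for this theorem (all the checks above are verifiably correct, including the two scalar Codazzi identities $\partial_u h_{12}=\partial_v h_{11}$ and $\partial_u h_{22}-\partial_v h_{12}=\rho\rho'H$, both of which collapse to the same ODE); the paper's frame formalism earns its keep earlier, in proving Theorem \ref{thm:cmc} for a general, not-yet-linear twist $\alpha(v)$, which your setup takes as given. One cosmetic point: ``non-vanishing of $\sin 2av$ and $\cos 2av$'' should be phrased as the identities holding for all $v$ while the two functions have no common zero, since each does vanish at isolated points; this does not affect the conclusion.
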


In the special case of minimal surfaces, we can achieve a complete classification.

\begin{theorem}\label{thm:min}
Let $\Sigma$ be  an intrinsic surface of revolution that is also minimal with constant twist $\alpha(v)=av$ with $a>0$.   Then $\Sigma$ belongs to an explicit 2-parameter family of minimal surfaces with Weierstrass data given by
\[
G(w) =-\frac{1}{A} w^{B}\qquad\text{and}\qquad dh  = \frac{1}{2B} w^{2 a-1} \, dw \ ,
\] 
with parameters $A$ and $B$.
\end{theorem}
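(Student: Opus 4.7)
The plan is to apply Bonnet's theorem via the Weierstrass representation. Theorem \ref{thm:ode} specialized to $H=0$ already supplies the first fundamental form $\rho(u)^2 |dz|^2$ (with $z = u+iv$) where $\rho$ satisfies $\rho'^2 - \rho\rho'' = -b^2 e^{4au}$, together with principal curvatures $\lambda_{1,2} = \pm b\, e^{2au}/\rho^2$ and principal directions rotating at constant angular speed $a$ in $v$. What remains is to realize this intrinsic and extrinsic data by an explicit meromorphic Gauss map $G$ and holomorphic height differential $dh$.

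I would first pass to the polar coordinate $w = e^{z}$ and pin down the Hopf differential. The shape operator data of Definition \ref{def:shape} forces $|Q| = \lambda_1 \rho^2 = b\, e^{2au}$ and $\arg Q \equiv 2av \pmod{\pi}$; holomorphicity then forces $Q(z)\,dz^2 = c\,e^{2az}\,dz^2$, equivalently $Q(w)\,dw^2 = c\, w^{2a-2}\, dw^2$, for some constant $c \in \C$. Recalling that in the Weierstrass representation the induced metric is $\tfrac{1}{4}(|G| + 1/|G|)^2 |dh|^2$ and the Hopf differential is $-\,dh \cdot dG/G$, the rotational invariance of $|G|$ and $|dh|$ together with their meromorphic/holomorphic character forces the monomial ansatz $G(w) = \gamma\, w^{B}$ and $dh = \delta\, w^{q}\, dw$. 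Matching the Hopf differential then yields $q = 2a - 1$ and $B\delta = -c$; writing $\gamma = -1/A$ and normalizing $\delta = 1/(2B)$ (which fixes $c = -1/2$) produces the formulas in the statement with two free parameters $A, B$.

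To close the argument I would verify that the induced metric from this data satisfies the ODE in Theorem \ref{thm:ode}; a direct computation of $\rho'^2 - \rho\rho''$ on $\rho(u)^2 = |w|^{-2}\,\tfrac{1}{4}(|G|+1/|G|)^2 |dh/dw|^2\,|w|^2$ should yield $-e^{4au}/4$, so the surface is indeed an intrinsic surface of revolution with $b = 1/2$, and any other value of $b$ is recovered by an ambient scaling of $\R^3$. The main obstacle I anticipate is orientation and convention bookkeeping: the monomial ansatz must be justified not merely as a natural candidate but as forced by the full intrinsic rotational symmetry of $I$ (rather than just a discrete subgroup), and one must check that the sign conventions in Definition \ref{def:shape} yield twist $+av$ rather than $-av$ for the surface built from the stated $(G, dh)$. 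Everything else should reduce to explicit computation.
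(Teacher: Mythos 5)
Your overall route (work directly with the Hopf differential and the Weierstrass data, then invoke Bonnet for uniqueness) is genuinely different from the paper's, which instead solves the $\rho$-ODE explicitly (Lemma \ref{lem:rhosol}), integrates the frame equations along $v=0$ to get a planar curve, and recovers the surface via the Bj\"orling formula. In principle your route can work, but as written it has a real gap at its central step, and you have in effect flagged it yourself without closing it. The hypotheses give you rotational invariance of the metric, i.e.\ of the single combination $\tfrac14(|G|+1/|G|)^2|dh|^2$, and of $|Q|$; they do \emph{not} directly give rotational invariance of $|G|$ and $|dh|$ separately, which is what you use to force the monomial ansatz. (If you \emph{did} know that $|G|$ depends only on $u$, the rest would be easy: $\log|G|$ is harmonic and a function of $u$ alone, hence affine in $u$, so $G$ is an exponential in $z$, i.e.\ a monomial in $w$; similarly for $dh$.) Writing $G=e^{g}$ and eliminating $dh$ via the Hopf differential, the two available conditions collapse to the single real equation ``$\cosh(\operatorname{Re}g)/|g'|$ is a function of $u$ alone,'' and taking Laplacians of its logarithm only reproduces an identity (the ODE $\psi\psi''-\psi'^2=1$ for that function), giving no further pointwise constraint on $g$. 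So the claim that the ansatz is \emph{forced} is exactly the content of the classification and cannot be dismissed as bookkeeping.

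The missing ingredient is precisely what the paper supplies in Lemma \ref{lem:rhosol}: every positive solution of $\rho'^2-\rho\rho''=-e^{4au}$ equals $\tfrac{e^{2au}}{2B}\bigl(Ae^{Bu}+\tfrac1A e^{-Bu}\bigr)$ for some $A,B>0$, proved by matching arbitrary initial data $(\sigma(u),\sigma'(u))$ and invoking ODE uniqueness. With that in hand, your argument closes cleanly in its Bonnet form: the metric and Hopf differential of any $\Sigma$ in the hypothesis are among those realized by the stated $(G,dh)$ for some $(A,B)$ (your final verification, which does check out up to the normalization $b=1/2$ and a shift of $v$ absorbing $\arg Q$), and Bonnet's uniqueness then identifies $\Sigma$ with that surface up to rigid motion. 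I recommend restructuring the proof this way --- exhaustiveness of the $\rho$-family plus Bonnet uniqueness --- rather than trying to prove separate rotational invariance of $|G|$ and $|dh|$, which does not follow from the stated hypotheses by the argument you sketch.
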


In the case that the twist function is $\alpha(v)\equiv a=0$, we prove:

\begin{theorem}\label{thm:unt}
Given a conformal factor $\rho(u)$ on an interval $u_1<u<u_2$ and a constant $c$ such that $c^2 \rho (u)>|\rho '(u)|$ for all $u_1<u<u_2$. Then there is an intrinsic surface of revolution defined on the domain $(u_1,u_2)\times \R$ with first fundamental form $I_\rho$ and twist $\alpha(v)\equiv 0$. Moreover, this surface can be realized as an actual surface of revolution in $\R^3$ of the form
\[
f(u,v) = \left( g(u) \cos(c v), g(u)\sin( c v), h(u) \right)
\]
with suitable functions $g, h: (u_1,u_2)\times \R$.
\end{theorem}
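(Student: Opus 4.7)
The plan is to reverse-engineer the explicit parametrization directly rather than invoke Bonnet. The ansatz $f(u,v) = (g(u)\cos(cv), g(u)\sin(cv), h(u))$ gives $\langle f_u, f_v\rangle = 0$ and $\langle f_v, f_v\rangle = c^2 g(u)^2$ automatically, so matching against $I_\rho$ forces $g(u) = \rho(u)/c$ (positive root) and then $h'(u)^2 = \rho(u)^2 - \rho'(u)^2/c^2$. The hypothesis $c^2\rho(u) > |\rho'(u)|$ is precisely what guarantees the right side is strictly positive on $(u_1,u_2)$, so one may set
\[
h(u) = \int_{u_0}^{u} \sqrt{\rho(t)^2 - \rho'(t)^2/c^2}\, dt,
\]
producing a regular $C^2$ parametrization whose first fundamental form equals $I_\rho$ by construction.

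Next I would verify the twist. A direct computation yields the unit normal $N = \rho^{-1}(-h'\cos(cv), -h'\sin(cv), g')$ and $f_{uv} = c(-g'\sin(cv), g'\cos(cv), 0)$, from which the mixed coefficient $M = \langle f_{uv}, N\rangle$ cancels termwise and vanishes identically. Together with $\langle f_u, f_v\rangle = 0$, this means the shape operator is already diagonal in the $(\partial_u, \partial_v)$-frame, and its diagonal entries (the principal curvatures) depend only on $u$. Taking $\alpha(v) \equiv 0$ in Equation (\ref{eqn:shapedef}) then exhibits $S$ in the required form, so both the abstract existence of $\Sigma$ as an intrinsic surface of revolution and its explicit realization as a classical surface of revolution in $\R^3$ are established in one stroke.

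The only minor subtlety concerns the domain: since $f$ is $(2\pi/c)$-periodic in $v$, on $(u_1,u_2)\times\R$ it wraps around the $z$-axis infinitely often and produces a (possibly multiply covered) surface of revolution; this costs nothing since the local immersion property is what matters. No deep tool is needed, the entire argument is a short direct calculation, and the hypothesis $c^2\rho > |\rho'|$ enters at exactly one point, namely to keep $h'$ real. There is no serious obstacle to anticipate.
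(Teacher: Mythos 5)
Your proof is correct, but it runs in the opposite direction from the paper's. The paper works abstractly first: it writes down the Gauss and Codazzi equations for the data $(I_\rho, S)$ with $\alpha\equiv 0$, solves the resulting ODE system explicitly for the principal curvatures
\[
\lambda_1(u)=\frac{\rho\rho''-\rho'^2}{\rho^2\sqrt{c^2\rho^2-\rho'^2}},\qquad \lambda_2(u)=-\frac{\sqrt{c^2\rho^2-\rho'^2}}{\rho^2},
\]
with $c$ entering as an integration constant, and only then builds $g=\rho/c$ and $h'=\frac1c\sqrt{c^2\rho^2-\rho'^2}$ and checks that the resulting surface of revolution has this shape operator. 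You skip the abstract step entirely: you construct $f$ from the metric alone and then read off that its second fundamental form is diagonal ($M=\langle f_{uv},N\rangle=0$) with $u$-dependent entries, so the twist-$0$ condition holds automatically and the abstract existence is subsumed by the concrete realization. Your route is shorter and avoids any appeal to Bonnet's theorem or to solving the Codazzi equation; what the paper's route buys in exchange is the identification of the \emph{general} solution $(\lambda_1,\lambda_2)$ of the untwisted Codazzi system, which is what supports its surrounding claim that \emph{every} untwisted intrinsic surface of revolution arises as a classical surface of revolution --- a statement your argument does not address, but which is also not part of the theorem as quoted. One small point: the positivity you actually need for $h'$ is $c^2\rho^2>\rho'^2$, i.e.\ $|c|\,\rho>|\rho'|$, which is not literally the stated hypothesis $c^2\rho>|\rho'|$; you call the hypothesis ``precisely'' the needed condition, which it is not, though this mismatch originates in the paper's own statement (its proof explicitly requires the radicand $c^2\rho^2-\rho'^2$ to be positive) and does not affect the substance of your argument.
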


The paper is organized as follows:

\begin{itemize}
\item
In section \ref{sec:gc}, we compute the Gauss- and Codazzi equations for intrinsic surfaces of revolution, reduce them to a single ODE for $\rho$, and prove Theorems \ref{thm:cmc} and \ref{thm:ode}.
\item
In section \ref{sec:min}, we specialize this equation to the minimal case, integrate the surface equations, find the Weierstrass representation of the surfaces, prove Theorem \ref{thm:min}, and give examples.
\item
In section \ref{sec:cmc}, we briefly discuss the constant mean curvature case by connecting our approach to Smyth's. While we are not able to find explicit solutions for the Smyth surfaces, we can find numerical solutions and make images.
\item
In section \ref{sec:unt}, we consider the case of twist 0, prove Theorem \ref{thm:unt}, and show that sectors of the Enneper surface are isometric to sectors (with different angle) of surfaces of revolution.
\end{itemize}

\section{Gauss- and Codazzi equations for intrinsic surfaces of revolution with twist $n>0$}\label{sec:gc}

We will apply Bonnet's theorem to determine when the first fundamental form and shape operator of an intrinsic surface of revolution with twist function $\alpha$ are induced by an actual surface in $\R^3$.

In order  to derive the Gauss- and Codazzi equations we first determine the relevant covariant derivatives. Much of this preparation is standard.

Introduce
\begin{equation}\label{eqn:UV}
U = \frac1{\rho(u)} \frac{\partial}{\partial u} \qquad V = \frac1{\rho(u)} \frac{\partial}{\partial v}
\end{equation}
as the normalized coordinate vector fields. Then we have

\begin{lemma}\label{lemma:levi}
The Levi-Civita connection of the first fundamental form $I_\rho$ is given by
\begin{alignat*}{3} 
   D_U U {}&= 0 &\qquad D_U V {}&= 0\\
   D_V U {}&= \frac{\rho'(u)}{\rho(u)^2} V &\qquad D_V V {}&= -\frac{\rho'(u)}{\rho(u)^2} U\ .\\
\end{alignat*}
\end{lemma}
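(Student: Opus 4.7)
The plan is a direct, two-step computation: first compute the Christoffel symbols of the conformally flat metric $I_\rho=\rho(u)^2(du^2+dv^2)$ in the coordinate frame $\partial/\partial u,\partial/\partial v$, and then convert to the orthonormal frame $U,V$ via the Leibniz rule, tracking the derivatives of $1/\rho$.

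For the first step, I would use the standard formula $\Gamma^k_{ij}=\tfrac12 g^{k\ell}(\partial_i g_{j\ell}+\partial_j g_{i\ell}-\partial_\ell g_{ij})$. Because $g_{ij}=\rho(u)^2\delta_{ij}$ depends only on $u$, the only nonzero derivatives of the metric are $\partial_u(\rho^2)=2\rho\rho'$, and this gives
\[
\Gamma^u_{uu}=\Gamma^v_{uv}=\Gamma^v_{vu}=\frac{\rho'}{\rho},\qquad \Gamma^u_{vv}=-\frac{\rho'}{\rho},
\]
with the remaining symbols vanishing. In particular, $D_{\partial_u}\partial_u=(\rho'/\rho)\,\partial_u$, $D_{\partial_u}\partial_v=D_{\partial_v}\partial_u=(\rho'/\rho)\,\partial_v$, and $D_{\partial_v}\partial_v=-(\rho'/\rho)\,\partial_u$.

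For the second step I would expand each $D_XY$ using the Leibniz rule, e.g.
\[
D_UU=\frac1\rho D_{\partial_u}\!\Bigl(\frac1\rho\partial_u\Bigr)=\frac1\rho\Bigl[\partial_u\!\Bigl(\frac1\rho\Bigr)\partial_u+\frac1\rho D_{\partial_u}\partial_u\Bigr]=\frac1\rho\Bigl[-\frac{\rho'}{\rho^2}\partial_u+\frac{\rho'}{\rho^2}\partial_u\Bigr]=0,
\]
and similarly for the other three expressions, reinserting $\partial_u=\rho U$ and $\partial_v=\rho V$ at the end. The calculations for $D_UV$, $D_VU$, $D_VV$ are the same style: in $D_UV$ one again gets a cancellation between the derivative of $1/\rho$ and the Christoffel correction (since $\partial_u V$ picks up the same $-\rho'/\rho^2$ that $\Gamma^v_{uv}=\rho'/\rho$ cancels), while in $D_VU$ and $D_VV$ the derivative of $1/\rho$ along $\partial_v$ is zero, so only the Christoffel contribution survives, giving the $\rho'/\rho^2$ factors in the stated formulas.

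There is no real obstacle here; the only thing to be careful about is sign-tracking and the conversion $\partial_v=\rho V$ versus $\partial_v=\rho(u)V$ when differentiating along $U$ (the coefficient $1/\rho$ does not depend on $v$, which is why $D_UV=0$). Once the Christoffel symbols are written down, the four identities follow by the four one-line calculations above.
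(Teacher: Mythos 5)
Your computation is correct: the Christoffel symbols you list for the conformal metric $\rho(u)^2(du^2+dv^2)$ are right, and the four Leibniz-rule conversions to the frame $U=\frac1\rho\partial_u$, $V=\frac1\rho\partial_v$ do give exactly the stated identities. However, you take a genuinely different route from the paper. The paper avoids Christoffel symbols entirely and argues geometrically: $D_UU=0$ because the $v$-invariance of the metric makes the curves $s\mapsto(s,v)$ unit-speed geodesics; $D_UV=0$ because $V=R^{\pi/2}U$ and intrinsic rotations are parallel; $D_VU$ is then obtained from torsion-freeness via $D_VU=D_UV+[V,U]$ together with a direct computation of the Lie bracket; and $D_VV$ follows from metric compatibility by expanding in the orthonormal frame. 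Your approach is more mechanical and self-contained --- it needs no justification beyond the standard coordinate formula --- at the cost of some symbol-pushing; the paper's approach is shorter once one grants the qualitative facts (radial curves are geodesics, rotations are parallel), and it makes transparent the statement, reused later in the paper, that the frame $(U,V)$ is parallel in the $U$-direction. One tiny quibble with your closing parenthetical: $D_UV=0$ is not a consequence of $1/\rho$ being independent of $v$ (that is what kills the derivative term in $D_VU$ and $D_VV$); it comes from the cancellation between $\partial_u(1/\rho)$ and the Christoffel correction $\Gamma^v_{uv}=\rho'/\rho$, exactly as you state correctly earlier in the same paragraph. This does not affect the validity of the proof.
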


\begin{proof}

By the $v$-invariance of the first fundamental form, the curves $s\mapsto (s,v)$ are geodesics, and $U$ has length $1$ with respect to the first fundamental form. This implies $D_UU=0$. Next $V= R^{\pi/2} U$ and intrinsic rotations are parallel, so that $D_U V=0$ as well.

Using that $D$ is torsion-free and metric, we compute
\begin{align*}
D_V U 	{}&= D_U V +[V,U]\\
		{}&=  d_V U - d_U V \\
		{}&= \frac{\partial}{\rho(u) \partial v}\frac{\partial}{\rho(u) \partial u} - \frac{\partial}{\rho(u) \partial u}\frac{\partial}{\rho(u) \partial v}  \\
		{}&= -\frac1{\rho(u)}  \frac{\partial}{ \partial u}\frac{\partial}{\rho(u) \partial v}  \\
		{}&= \frac1{\rho(u)}  \frac{\rho'(u)}{\rho(u)^2}\frac{\partial}{ \partial v}  \\
		{}&=   \frac{\rho'(u)}{\rho(u)^2}V  
\end{align*}

and

\begin{align*}
D_V V 	{}&= I(D_V V, U) U + I(D_V V, V) V \\
		{}&=  d_V  I( V,U) U -I( V, D_V U) U \\
		{}&=   - \frac{\rho'(u)}{\rho(u)^2} U \ . 
\end{align*}

\end{proof}

\begin{lemma}\label{lem:gauss}
The Gauss equation is equivalent to
\[
\lambda_1(u)\lambda_2(u) = \frac{\rho'(u)^2-\rho(u)\rho''(u)}{\rho(u)^4} \ .
\]
\end{lemma}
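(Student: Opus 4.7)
The plan is to exploit Theorema Egregium: the Gauss equation asserts that the determinant of the shape operator equals the intrinsic Gaussian curvature of $I_\rho$. Since both sides of the asserted identity are intrinsic (the left because conjugation by $R^{\alpha(v)}$ preserves determinants, the right because it depends only on $\rho$), the equation collapses to computing these two quantities.

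First I would observe that
\[
\det S = \det\!\left(R^{-\alpha(v)}\operatorname{diag}(\lambda_1,\lambda_2)R^{\alpha(v)}\right) = \lambda_1(u)\lambda_2(u),
\]
so the left-hand side of the claimed identity is precisely $\det S$. Next I would compute the Gaussian curvature of $I_\rho$ directly from Lemma \ref{lemma:levi} via the intrinsic formula $K = I(R(U,V)V,U)$, where $U,V$ are the orthonormal frame \eqref{eqn:UV} and $R(X,Y)Z = D_X D_Y Z - D_Y D_X Z - D_{[X,Y]}Z$.

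The ingredients are already listed in Lemma \ref{lemma:levi}: we have $D_UU=D_UV=0$, $D_VU=\frac{\rho'}{\rho^2}V$ and $D_VV=-\frac{\rho'}{\rho^2}U$. From these one reads off $[U,V] = D_UV - D_VU = -\frac{\rho'}{\rho^2}V$, so
\[
D_{[U,V]}V = -\frac{\rho'}{\rho^2}\,D_VV = \frac{(\rho')^2}{\rho^4}\,U.
\]
Since $D_UV=0$, the term $D_VD_UV$ vanishes, and the remaining piece is
\[
D_UD_VV = D_U\!\left(-\frac{\rho'}{\rho^2}U\right) = -\frac{1}{\rho}\,\frac{\partial}{\partial u}\!\left(\frac{\rho'}{\rho^2}\right)U
= \left(-\frac{\rho''}{\rho^3} + \frac{2(\rho')^2}{\rho^4}\right)U,
\]
using $D_UU=0$. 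Combining yields $R(U,V)V = \frac{(\rho')^2-\rho\rho''}{\rho^4}\,U$, so
\[
K = I\bigl(R(U,V)V,U\bigr) = \frac{\rho'(u)^2-\rho(u)\rho''(u)}{\rho(u)^4}.
\]

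Equating $\det S = K$ then gives the stated identity. There is no serious obstacle here; the only thing to watch is sign conventions for the curvature tensor, which I would cross-check against the standard formula $K = -\rho^{-2}\Delta\log\rho$ for a conformally flat metric $\rho^2(du^2+dv^2)$, giving $K = (\rho'^2-\rho\rho'')/\rho^4$ independently and confirming the computation.
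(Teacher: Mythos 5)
Your proposal is correct and follows essentially the same route as the paper: both reduce the Gauss equation to $\det S = \lambda_1\lambda_2$ (invariance of the determinant under conjugation by $R^{\alpha(v)}$) and then compute $I(\Rcal(U,V)V,U)$ term by term from the covariant derivatives in Lemma \ref{lemma:levi}, arriving at the same expression $(\rho'^2-\rho\rho'')/\rho^4$. The independent cross-check via $K=-\rho^{-2}\Delta\log\rho$ is a nice addition but not part of the paper's argument.
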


\begin{proof}
The Gauss equation gives us:
\[
I(\Rcal(U, V)V, U) = det(S) = \lambda_1(u)\lambda_2(u)
\]
Let us calculate $\Rcal(U, V)V$.
\begin{align*}
\Rcal(U, V)V &=D_U D_V V - D_V D_U V- D_{[U,V]} V\\
&=D_U\left(- \frac{\rho'(u)}{\rho(u)^2} U\right)- D_V (0)- D_{-\frac{\rho'(u)}{\rho(u)^2}V} V\\
&=-d_U\left(\frac{\rho'(u)}{\rho(u)^2}\right)U -D_U U+ \frac{\rho'(u)}{\rho(u)^2}D_V V\\
&=-\frac1{\rho(u)}\frac{\partial}{ \partial u}\left(\frac{\rho'(u)}{\rho(u)^2}\right) U + \frac{\rho'(u)}{\rho(u)^2}\left(- \frac{\rho'(u)}{\rho(u)^2} U\right)\\
&=-\frac1{\rho(u)}\left(\frac{\rho''(u)\rho(u)^2-\rho'(u)(2\rho(u)\rho'(u))}{\rho(u)^4}\right) U - \frac{\rho'(u)^2}{\rho(u)^4} U\\
&=\frac{-\rho''(u)\rho(u)+2\rho'(u)^2-\rho'(u)^2}{\rho(u)^4} U \\
&=\frac{\rho'(u)^2-\rho(u)\rho''(u)}{\rho(u)^4} U
\end{align*}
The claim follows.
\end{proof}

In order to use the Codazzi equations we continue to compute the relevant covariant derivatives.

\begin{lemma}The covariant derivatives of the twist rotation are given by
\[
D_U R^{\alpha(v)} = 0  \qquad\text{and}\qquad D_V R^{\alpha(v)} = \frac{\alpha'(v)}{\rho(u)} R^{\alpha(v)+\frac\pi2} \ .
\]
\end{lemma}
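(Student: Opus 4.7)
The plan is to unfold the covariant derivative of the endomorphism $R^{\alpha(v)}$ via the Leibniz rule $(D_X R^\alpha)(Y) = D_X(R^\alpha Y) - R^\alpha(D_X Y)$ and work entirely in the orthonormal frame $\{U,V\}$ from (\ref{eqn:UV}). In this frame $R^{\alpha} U = \cos\alpha\cdot U + \sin\alpha\cdot V$ and $R^{\alpha} V = -\sin\alpha\cdot U + \cos\alpha\cdot V$, so it suffices to verify each claimed identity on the basis vectors $U$ and $V$.

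For $D_U R^{\alpha(v)}$: since $\alpha(v)$ is independent of $u$, the coefficients $\cos\alpha$ and $\sin\alpha$ are killed by $d_U$, and Lemma \ref{lemma:levi} supplies $D_U U = D_U V = 0$. Hence both $D_U(R^\alpha U)$ and $R^\alpha(D_U U)$ vanish; the same argument handles $V$, giving $D_U R^{\alpha(v)} = 0$.

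For $D_V R^{\alpha(v)}$: the only subtlety is that $V = \frac{1}{\rho(u)}\partial_v$, so $d_V(\cos\alpha(v)) = -\frac{\alpha'(v)}{\rho(u)}\sin\alpha$ and similarly for $\sin\alpha$. Expanding $D_V(R^\alpha U)$ by Leibniz and substituting $D_V U = \frac{\rho'}{\rho^2}V$, $D_V V = -\frac{\rho'}{\rho^2}U$ from Lemma \ref{lemma:levi}, the four terms regroup into two copies of the vector $-\sin\alpha\cdot U + \cos\alpha\cdot V = R^{\alpha+\pi/2} U$, weighted by $\frac{\alpha'(v)}{\rho(u)}$ and $\frac{\rho'(u)}{\rho(u)^2}$ respectively. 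The second weighted piece equals $R^\alpha(D_V U)$ exactly, and is therefore cancelled by the Leibniz-rule subtraction, leaving $(D_V R^\alpha)(U) = \frac{\alpha'(v)}{\rho(u)}R^{\alpha+\pi/2} U$. The check on $V$ is essentially the same calculation and produces the matching formula. Conceptually this cancellation reflects the fact that $D_V$ acts on the orthonormal frame $\{U,V\}$ by an infinitesimal rotation — the Levi-Civita connection is metric — and therefore commutes with the finite rotation $R^\alpha$; the only surviving contribution to $D_V R^\alpha$ is the $v$-derivative of $\alpha$ itself. I do not foresee any real obstacle: the argument is a direct calculation, with care only needed to track the $1/\rho(u)$ factor coming from the normalization of $\partial_v$ to $V$.
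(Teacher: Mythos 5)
Your proof is correct and uses the same ingredients as the paper's (much terser) argument: the paper simply notes that rotations by a constant angle are parallel — which is exactly the cancellation of the $\rho'/\rho^2$ terms you verify explicitly via Lemma \ref{lemma:levi} — and then applies the chain rule with the $1/\rho(u)$ factor from $V=\frac{1}{\rho}\partial_v$. Your frame-by-frame computation is a valid, more detailed rendering of the same approach.
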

\begin{proof}
The first equations follows because intrinsic rotations by a constant angle are parallel and $\alpha$ is independent of $u$.
For the second, we use the chain rule and observe that $V=\frac 1\rho \frac{\partial}{\partial v}$. 
\end{proof}

\begin{lemma}The covariant derivatives of the eigenvalue endomorphism $\Lambda$  are given by
\begin{align*} 
  D_U \Lambda(u)  {}&=  \frac1{\rho(u)}\Lambda'(u) \\
  (D_V \Lambda(u)) U {}&=  \frac{\rho'(u)}{\rho^2(u)}(\lambda_1(u) -\lambda_2(u)) V\\
  (D_V \Lambda(u)) V  {}&=  \frac{\rho'(u)}{\rho^2(u)}(\lambda_1(u) -\lambda_2(u)) U\ .
  \end{align*}
\end{lemma}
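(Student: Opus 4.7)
The proof is a direct computation that applies the Leibniz rule for covariant derivatives of endomorphism-valued tensor fields, namely $(D_X \Lambda)(Y) = D_X(\Lambda(Y)) - \Lambda(D_X Y)$. Since $\Lambda(u)$ is diagonal in the orthonormal frame $\{U,V\}$ with $\Lambda U = \lambda_1(u)\, U$ and $\Lambda V = \lambda_2(u)\, V$, it suffices to evaluate on these two vector fields and then invoke Lemma \ref{lemma:levi} to handle the connection terms.

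For $D_U \Lambda$, the plan is to write $(D_U \Lambda)U = D_U(\lambda_1 U) - \lambda_1 D_U U$ and $(D_U \Lambda)V = D_U(\lambda_2 V) - \lambda_2 D_U V$. Since $D_U U = D_U V = 0$ by Lemma \ref{lemma:levi}, only the directional derivatives of the eigenvalues survive. Because $U = \frac{1}{\rho(u)}\partial_u$ and $\lambda_i$ depends only on $u$, we get $d_U \lambda_i = \lambda_i'(u)/\rho(u)$, yielding $(D_U\Lambda)U = \frac{\lambda_1'(u)}{\rho(u)} U$ and $(D_U\Lambda)V = \frac{\lambda_2'(u)}{\rho(u)} V$. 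This is precisely the endomorphism $\frac{1}{\rho(u)}\Lambda'(u)$, establishing the first identity.

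For $D_V \Lambda$, the idea is the same, but now the derivatives $d_V\lambda_i$ vanish (since $\lambda_i$ is independent of $v$) while the connection terms $D_V U = \tfrac{\rho'}{\rho^2} V$ and $D_V V = -\tfrac{\rho'}{\rho^2} U$ from Lemma \ref{lemma:levi} become the active contribution. Concretely, $(D_V\Lambda)U = D_V(\lambda_1 U) - \Lambda(D_V U) = \lambda_1 \tfrac{\rho'}{\rho^2} V - \tfrac{\rho'}{\rho^2}\lambda_2 V = \tfrac{\rho'}{\rho^2}(\lambda_1 - \lambda_2) V$, and the computation for $(D_V\Lambda)V$ is symmetric: $D_V(\lambda_2 V) - \Lambda(D_V V) = -\lambda_2 \tfrac{\rho'}{\rho^2} U + \lambda_1 \tfrac{\rho'}{\rho^2} U = \tfrac{\rho'}{\rho^2}(\lambda_1 - \lambda_2)U$.

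There is essentially no obstacle here beyond careful bookkeeping; the only subtlety worth flagging is the sign pattern in Lemma \ref{lemma:levi}, which is what makes $(D_V\Lambda)U$ and $(D_V\Lambda)V$ come out with the same coefficient $\tfrac{\rho'}{\rho^2}(\lambda_1 - \lambda_2)$ rather than opposite signs. Once the Leibniz rule is applied consistently and the identities $D_U U = D_U V = 0$, $D_V U = \tfrac{\rho'}{\rho^2}V$, $D_V V = -\tfrac{\rho'}{\rho^2}U$ are substituted, all three formulas drop out in a few lines.
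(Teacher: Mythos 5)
Your proof is correct and follows essentially the same route as the paper: apply the Leibniz rule $(D_X\Lambda)Y = D_X(\Lambda Y) - \Lambda(D_X Y)$ on the frame $\{U,V\}$ and substitute the connection formulas from Lemma \ref{lemma:levi}. The only difference is that you spell out the first identity, which the paper dismisses as immediate since the frame is parallel in the $U$-direction.
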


\begin{proof}
The first equation is immediate because the frame $(U,V)$ is parallel in the $U$-direction. For the second, we compute
\begin{align*} 
 ( D_V \Lambda) U  {}&=  D_V (\lambda_1 U) - \Lambda D_V U\\
  {}&=  \lambda_1 D_V U - \lambda_2 D_V U\\
  {}&=  \frac{\rho'(u)}{\rho^2(u)}(\lambda_1(u) -\lambda_2(u)) V\ .
  \end{align*}
The third equation is proven the same way.
\end{proof}

\begin{lemma}The covariant derivatives of the shape operator are given by
\begin{align*} 
  (D_U  S) V  {}&=  \frac1{\rho(u)}R^{-\alpha(v)} \Lambda'(u) R^{\alpha(v)} V\\
  (D_V S) U {}&=  (\lambda_1(u)-\lambda_2(u))\frac{\rho'(u)-\rho(u)\alpha'(v)}{\rho^2(u)}R^{-2\alpha(v)} V \ .
  \end{align*}
\end{lemma}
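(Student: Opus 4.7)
The plan is to apply the tensor Leibniz rule $(D_X S)Y = D_X(SY) - S(D_X Y)$ together with the factorization $S = R^{-\alpha(v)}\Lambda(u)R^{\alpha(v)}$ and the covariant-derivative identities established in the two preceding lemmas.

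The formula for $(D_U S)V$ is essentially immediate. Since $D_U V = 0$ and $D_U R^{\pm\alpha} = 0$, the only surviving term when the product rule is applied to $SV = R^{-\alpha}\Lambda R^{\alpha} V$ comes from differentiating the middle factor, giving $(D_U S)V = R^{-\alpha}(D_U \Lambda) R^{\alpha} V = \frac{1}{\rho(u)} R^{-\alpha(v)}\Lambda'(u) R^{\alpha(v)} V$, which is the claim.

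For $(D_V S)U$, I would first expand $SU$ and $SV$ in the $(U,V)$-frame by carrying out the conjugation, obtaining $SU = (\lambda_1\cos^2\alpha + \lambda_2\sin^2\alpha)U + \tfrac{1}{2}(\lambda_2-\lambda_1)\sin 2\alpha\, V$ together with the analogous expression for $SV$. Then compute $D_V(SU)$ via the scalar Leibniz rule $D_V(fW) = \frac{1}{\rho}(\partial_v f)W + f\,D_V W$, substituting $D_V U = \frac{\rho'}{\rho^2} V$ and $D_V V = -\frac{\rho'}{\rho^2} U$ from Lemma \ref{lemma:levi}. Write $S(D_V U) = \frac{\rho'}{\rho^2} SV$ using the $SV$ expansion, and subtract. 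Collecting $U$- and $V$-coefficients and applying the double-angle identities $\sin 2\alpha = 2\sin\alpha\cos\alpha$ and $\cos 2\alpha = \cos^2\alpha - \sin^2\alpha$, the $\rho'$- and $\alpha'$-dependent terms should combine into $(\lambda_1-\lambda_2)\frac{\rho'-\rho\alpha'}{\rho^2}\bigl(\sin 2\alpha\, U + \cos 2\alpha\, V\bigr) = (\lambda_1-\lambda_2)\frac{\rho'-\rho\alpha'}{\rho^2} R^{-2\alpha} V$, as claimed.

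The only conceptual point is recognizing at the end that the $U$- and $V$-coefficients reassemble into a single scalar multiple of $R^{-2\alpha}V$. This is natural: $S$ differs from the diagonal operator $\Lambda$ only by conjugation with $R^{\alpha}$, and $v$-differentiation touches only the rotation factors, so it is to be expected that the off-diagonal piece picks up a rotation by exactly $2\alpha$. Beyond this the computation is routine trigonometric bookkeeping; the main source of possible error is sign conventions in the rotation matrices and in the formula for $D_V V$, so I would be careful to keep those consistent throughout.
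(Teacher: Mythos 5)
Your proposal is correct, and I verified that the expansion you set up does close: with $SU = (\lambda_1\cos^2\alpha + \lambda_2\sin^2\alpha)U + (\lambda_2-\lambda_1)\sin\alpha\cos\alpha\, V$ and $SV = (\lambda_2-\lambda_1)\sin\alpha\cos\alpha\, U + (\lambda_1\sin^2\alpha + \lambda_2\cos^2\alpha)V$, the difference $D_V(SU) - S(D_V U)$ has $U$-coefficient $(\lambda_1-\lambda_2)\frac{\rho'-\rho\alpha'}{\rho^2}\sin 2\alpha$ and $V$-coefficient $(\lambda_1-\lambda_2)\frac{\rho'-\rho\alpha'}{\rho^2}\cos 2\alpha$, which is exactly $(\lambda_1-\lambda_2)\frac{\rho'-\rho\alpha'}{\rho^2}R^{-2\alpha}V$. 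Your treatment of $(D_US)V$ is identical to the paper's. For $(D_VS)U$, however, you take a genuinely different route: the paper never writes out the matrix entries of $S$; it applies the product rule at the operator level to the factorization $R^{-\alpha}\Lambda R^{\alpha}$ and feeds in the two preceding lemmas, $D_V R^{\alpha} = \frac{\alpha'}{\rho}R^{\alpha+\pi/2}$ and the formulas for $D_V\Lambda$, only passing to the $(U,V)$-frame at the end to recombine the three resulting terms. Your version expands $S$ into scalar components first and needs only the Levi-Civita formulas of Lemma \ref{lemma:levi}, so it is more elementary and self-contained (the two intermediate lemmas become unnecessary for this identity), at the cost of more trigonometric bookkeeping and of obscuring the structural reason the answer is a multiple of $R^{-2\alpha}V$ --- a reason the paper's factored computation, and your own closing remark, make transparent. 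Both arguments are valid; just be aware that if you adopt your version, the coefficient identities $2\sin\alpha\cos\alpha = \sin 2\alpha$ and $\cos^2\alpha - \sin^2\alpha = \cos 2\alpha$ must be applied to \emph{both} the $\alpha'$-terms and the $\rho'$-terms before the common factor $(\rho'-\rho\alpha')/\rho^2$ can be pulled out.
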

\begin{proof}
In the statement, we have indicated the dependence of each functions by their respective variables. To improve legibility, we will drop the variables in the 
computations below. Observe, however, that derivatives like $\alpha'$ and $\rho'$ are always taken with respect to the proper variables.

The first equation is again immediate. For the second, we need to work harder. We begin by differentiating the definition of $S$ according to the product rule, and then use the lemmas above:
\begin{align*} 
   (D_V S) U {}&= D_V (R^{-\alpha} \Lambda R^{\alpha}) (U) \\
   {}&= \left( (D_V R^{-\alpha} )\Lambda R^{\alpha} + R^{-\alpha} (D_V \Lambda) R^{\alpha} + R^{-\alpha} \Lambda (D_V R^{\alpha})\right)(U) \\
    {}&= R^{-\alpha} \left(- \frac{\alpha'}{\rho} R^{\frac\pi2}\Lambda R^{\alpha} + (D_V \Lambda) R^{\alpha} + \frac{\alpha'}{\rho}\Lambda  R^{\alpha+\frac\pi2}\right)(U) \\
   {}&= R^{-\alpha} \Big( -\frac{\alpha'}{\rho} R^{\frac\pi2}\Lambda (\cos(\alpha)U +\sin(\alpha)V)  + \\
    {}&\qquad (D_V \Lambda) (\cos(\alpha)U +\sin(\alpha)V) + \frac{\alpha'}{\rho}\Lambda  (-\sin(\alpha)U +\cos(\alpha)V)\Big) \\  
     {}&= R^{-\alpha} \Bigg(- \frac{\alpha'}{\rho} R^{\frac\pi2} (\lambda_1\cos(\alpha)U +\lambda_2\sin(\alpha)V)  + \\
    {}&\qquad   \frac{\rho'}{\rho^2}(\lambda_1 -\lambda_2)(\cos(\alpha)V +\sin(\alpha)U) + \frac{\alpha'}{\rho} (-\lambda_1\sin(\alpha)U +\lambda_2\cos(\alpha)V)\Bigg) \\  
    {}&= (\lambda_2-\lambda_1) \left( -\frac{\alpha'}{\rho} +\frac{\rho'}{\rho^2}\right) R^{-\alpha}(-\sin(\alpha)U-\cos(\alpha)V) \\ 
    {}&= (\lambda_1-\lambda_2) \left( -\frac{\alpha'}{\rho} +\frac{\rho'}{\rho^2}\right) R^{-2\alpha}  V 
   \end{align*}
\end{proof}

\begin{corollary}
The Codazzi equations are equivalent to
\begin{align*}
( \lambda'_1+\lambda'_2)\sin(\alpha) \cos(\alpha) {}&=  0  \\
\frac1\rho (\lambda_1-\lambda_2)(\rho'-\rho\alpha') {}&=  -\lambda_1'\sin^2(\alpha)+\lambda_2' \cos^2(\alpha) \ .
\end{align*}
\end{corollary}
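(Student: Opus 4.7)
My plan is to translate the Codazzi equation $(D_U S)V = (D_V S)U$ into scalar form in the orthonormal frame $\{U, V\}$, using the two expressions already computed in the preceding lemma. Equality of vectors is equality of both components in this basis, so there will be exactly two scalar equations, which I expect to match the two equations of the corollary.

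Expanding $\tfrac{1}{\rho}R^{-\alpha}\Lambda' R^{\alpha}V$ and $(\lambda_1 - \lambda_2)\tfrac{\rho'-\rho\alpha'}{\rho^2}R^{-2\alpha}V$ directly in the $\{U,V\}$ basis would mix $\sin^2(\alpha), \cos^2(\alpha)$ terms with $\sin(2\alpha), \cos(2\alpha)$ terms, which then have to be reconciled by double-angle identities. To avoid this I would first apply the rotation $R^{2\alpha}$ to both sides of the Codazzi identity. Since $R^{2\alpha}$ is an invertible endomorphism of the tangent plane, the rotated identity is equivalent; the right-hand side collapses to $(\lambda_1 - \lambda_2)\tfrac{\rho'-\rho\alpha'}{\rho^2}V$, and because $R^{2\alpha}R^{-\alpha} = R^{\alpha}$ the left-hand side becomes $\tfrac{1}{\rho}R^{\alpha}\Lambda' R^{\alpha}V$.

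Now I expand $R^{\alpha}V = -\sin(\alpha)U + \cos(\alpha)V$, apply $\Lambda'$ coordinate-wise to obtain $-\lambda_1'\sin(\alpha)U + \lambda_2'\cos(\alpha)V$, and apply $R^{\alpha}$ once more. Using $R^{\alpha}U = \cos(\alpha)U + \sin(\alpha)V$ and collecting terms yields
\[
-(\lambda_1' + \lambda_2')\sin(\alpha)\cos(\alpha)\, U \;+\; \bigl(\lambda_2'\cos^2(\alpha) - \lambda_1'\sin^2(\alpha)\bigr) V.
\]
Equating the $U$- and $V$-components with those of the rotated right-hand side and clearing a common factor of $\rho$ gives exactly the two equations of the corollary.

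The only real insight required is the preliminary rotation by $R^{2\alpha}$, which is the natural frame change packaging the trigonometric identities into the displayed form. Beyond that the verification is elementary bookkeeping and presents no genuine obstacle.
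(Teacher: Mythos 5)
Your proposal is correct and is essentially the paper's own argument: the paper cancels one factor of $R^{-\alpha}$ and then pairs with the rotated frame $R^{-\alpha}U$, $R^{-\alpha}V$, which — since $I(X,R^{-\alpha}W)=I(R^{\alpha}X,W)$ — is exactly your preliminary rotation by $R^{2\alpha}$ followed by reading off components in $\{U,V\}$. The expansion and the resulting two scalar equations match the paper's line for line.
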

\begin{proof}

The Codazzi equations state that $(D_X S) Y = (D_Y S) X$ for any pair of tangent vectors $X$ and $Y$. As we are in dimension 2 and the equation is symmetric, it suffices to verify this for $X=U$ and $Y=V$. By the previous theorem, this is equivalent to
\[
{\rho} \Lambda' R^{\alpha} V = (\lambda_1-\lambda_2)(\rho'-\rho\alpha')R^{-\alpha} V  \ .
\]
Pairing both sides with $I(\cdot, R^{-\alpha} U)$ and $I(\cdot, R^{-\alpha} V)$ respectively gives
\begin{align*}
{\rho} I( \Lambda' R^{\alpha} V, R^{-\alpha} U) {}&= 0 \\
{\rho} I(\Lambda' R^{\alpha} V, R^{-\alpha} V) {}&= (\lambda_1-\lambda_2)(\rho'-\rho\alpha') \ .
\end{align*}
The first equation simplifies to
\begin{align*}
0 {}&= I( -\Lambda' (\sin(\alpha) U + \cos(\alpha) V), \cos(\alpha) U - \sin(\alpha) V) \\
&= I\left( -\lambda'_1\sin(\alpha) U + \lambda'_2 \cos(\alpha) V), \cos(\alpha) U - \sin(\alpha) V\right) \\
&= ( \lambda'_1+\lambda'_2)\sin(\alpha) \cos(\alpha)  \\
\end{align*}
and the second to
\begin{align*}
 \frac1\rho (\lambda_1-\lambda_2)(\rho'-\rho\alpha') {}&= I(\Lambda' R^{\alpha} V, R^{-\alpha} V) \\
&= I( -\lambda_1'\sin(\alpha) U + \lambda_2' \cos(\alpha) V), \sin(\alpha) U + \cos(\alpha) V) \\
&=  -\lambda_1'\sin^2(\alpha)+\lambda_2' \cos^2(\alpha)  
\end{align*}
as claimed.
\end{proof}

We are now ready to prove Theorem \ref{thm:cmc}

\begin{proof}
By assumption, the twist function $\alpha$ is not identically equal to an integral multiple of $\pi/2$ on any open interval. By the first Codazzi equation,  the mean curvature $H(u)=\lambda_1(u)+\lambda_2(u)$ is constant except possibly at isolated points. As we assume that $H$ is at least $C^1$, this implies that $H$ is constant.

This simplifies the second Codazzi equation to 
\[
\frac1{\rho(u)} (2\lambda_1(u)-H)(\rho'(u)-\rho(u)\alpha'(v)) =  -\lambda_1'(u) \ .
\]

As the right hand side is independent of $v$, so is the left hand side. This can only be the case if $\alpha'(v)$ is a constant as claimed, or that $H=2\lambda_1(u)$ on an open interval. In the latter case  we have on the same interval that $\lambda_1(u) = \lambda_2(u) = \lambda$ for a constant $\lambda$. But this means that this portion of the surface is umbilic, which we have excluded.
\end{proof}

Observe that we have not used the Gauss equations in the proof above. We will now use the second Codazzi equation to eliminate $\lambda_1$ and $\lambda_2$ from the Gauss equations.

\begin{lemma}\label{lem:eigen}
For $\alpha(v) = a v$ and $H=\lambda_1(u)+\lambda_2(u)$ a constant, the second Codazzi equation has the general solution
\[
\lambda_1(u) = \frac12 H +b \frac{e^{2a u}}{\rho(u)^2} \ ,
\]
where $b$ is any real number.
\end{lemma}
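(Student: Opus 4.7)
The plan is to substitute the hypotheses directly into the second Codazzi equation and recognize that it reduces to a separable first-order ODE in $\lambda_1$.

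First I would use $H=\lambda_1+\lambda_2$ constant to write $\lambda_2(u)=H-\lambda_1(u)$ and $\lambda_2'(u)=-\lambda_1'(u)$. The right-hand side of the second Codazzi equation then collapses via $\sin^2(\alpha)+\cos^2(\alpha)=1$:
\[
-\lambda_1'\sin^2(\alpha)+\lambda_2'\cos^2(\alpha) = -\lambda_1'\bigl(\sin^2(\alpha)+\cos^2(\alpha)\bigr) = -\lambda_1'(u),
\]
so all $v$-dependence on the right disappears, which is consistent since the left-hand side is $v$-independent once $\alpha'(v)=a$ is constant. Writing $\lambda_1-\lambda_2 = 2\lambda_1-H$, the second Codazzi equation becomes
\[
\frac{1}{\rho(u)}\bigl(2\lambda_1(u)-H\bigr)\bigl(\rho'(u)-a\rho(u)\bigr) = -\lambda_1'(u).
\]

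Next I would introduce the shifted variable $\mu(u) := \lambda_1(u)-H/2$, so that $2\lambda_1-H = 2\mu$ and $\lambda_1' = \mu'$. The equation becomes the homogeneous linear ODE
\[
\mu'(u) = 2\mu(u)\!\left(a - \frac{\rho'(u)}{\rho(u)}\right).
\]
Separating variables and integrating yields $\log|\mu(u)| = 2au - 2\log\rho(u) + C$, i.e.
\[
\mu(u) = b\,\frac{e^{2au}}{\rho(u)^2}
\]
for an arbitrary real constant $b$ (including $b=0$, which corresponds to the trivial solution $\mu\equiv 0$). Unwinding the substitution gives exactly
\[
\lambda_1(u)=\tfrac{1}{2}H + b\,\frac{e^{2au}}{\rho(u)^2},
\]
as claimed, and a direct substitution back confirms it is a solution for every $b\in\R$.

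There is no real obstacle here; the content is entirely the observation that once $H$ is constant and $\alpha'$ is constant, the Codazzi equation becomes a linear, separable ODE for $\lambda_1-H/2$ whose integrating factor $\rho^2 e^{-2au}$ is read off by inspection. The only thing worth flagging is uniqueness: the argument gives the general solution on any interval where $\mu$ is nonzero, but standard ODE theory (or the observation that the equation is linear in $\mu$) shows no additional solutions arise, so every solution is of the stated form.
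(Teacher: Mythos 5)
Your proof is correct and follows essentially the same route as the paper: the paper substitutes $\mu(u)=\rho(u)^2\bigl(\lambda_1(u)-\tfrac12 H\bigr)$ to get the constant-coefficient equation $\mu'=2a\mu$, while you shift by $H/2$ first and then separate variables, which is the same integrating-factor computation in a slightly different order. Your extra remark on uniqueness via linearity is a sound (and welcome) addition that the paper leaves implicit.
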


\begin{proof}
Define 
\[
\mu(u)= \rho^2(u)\left(\lambda_1(u)-\frac12H\right) \ .
\]
The second Codazzi equation is then equivalent to 

\[
\mu'(u) = 2 a \mu(u) \ .
\]
Integrating and substituting back gives the claim.
\end{proof}

The following corollary proves Theorem \ref{thm:ode}.

\begin{corollary}
A first fundamental form $I_\rho$ with $\rho=\rho(u)$ and shape operator $S$ as in Equation (\ref{eqn:shapedef}) such that $H=\lambda_1(u)+\lambda_2(u)$ is constant and $\alpha(v)=a v$ satisfy the Gauss and Codazzi equations if and only if
\begin{equation}\label{eqn:rhoeqn}
\rho '(u)^2-\rho (u) \rho ''(u)=\frac{1}{4} H^2 \rho (u)^4-b^2
   e^{4 a u} \ .
\end{equation}
In particular, by Bonnet's theorem,  these data determine an intrinsic surface of revolution, and every such surface arises this way.
\end{corollary}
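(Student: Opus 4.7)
The plan is to assemble the pieces already established in this section rather than to compute anything new. Lemma \ref{lem:gauss} has reduced the Gauss equation to the algebraic identity $\lambda_1(u)\lambda_2(u) = (\rho'(u)^2-\rho(u)\rho''(u))/\rho(u)^4$, and Lemma \ref{lem:eigen} has integrated the second Codazzi equation under the hypotheses $H=\lambda_1+\lambda_2$ constant and $\alpha(v)=av$ to yield the explicit formula $\lambda_{1,2}(u) = \tfrac12 H \pm b\,e^{2au}/\rho(u)^2$ for some constant $b$. So the natural approach is simply to substitute the explicit expressions for $\lambda_1,\lambda_2$ into the Gauss identity and read off equation (\ref{eqn:rhoeqn}).

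First, I would dispense with the first Codazzi equation: since $H$ is constant, $\lambda_1'+\lambda_2'=0$ identically, so $(\lambda_1'+\lambda_2')\sin(\alpha)\cos(\alpha)=0$ holds on the nose regardless of $\alpha$. This confirms that only the second Codazzi equation imposes a genuine constraint, and that constraint is exactly what Lemma \ref{lem:eigen} has solved. Second, I would compute $\lambda_1(u)\lambda_2(u) = H^2/4 - b^2 e^{4au}/\rho(u)^4$ directly from the formulas, equate with $(\rho'^2-\rho\rho'')/\rho^4$ by Lemma \ref{lem:gauss}, and clear denominators by multiplying through by $\rho(u)^4$ to arrive at (\ref{eqn:rhoeqn}). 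Because every step in this chain is an equivalence rather than an implication, the converse direction is immediate: given any $\rho$ satisfying (\ref{eqn:rhoeqn}) and $\lambda_{1,2}$ defined by the displayed formula, the Gauss equation and both Codazzi equations hold.

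For the final assertion about existence of the surface, I would invoke Bonnet's theorem \cite{doc1} as cited in the statement of Theorem \ref{thm:ode}: a first fundamental form together with a symmetric shape operator satisfying the Gauss--Codazzi equations on a simply connected domain determines an isometric immersion into $\R^3$, unique up to rigid motion. Since $I_\rho$ is visibly symmetric and $S$ as written in (\ref{eqn:shapedef}) is conjugate to a diagonal matrix by a rotation (hence self-adjoint with respect to $I_\rho$), the hypotheses of Bonnet apply, and each $\rho$ solving (\ref{eqn:rhoeqn}) yields an intrinsic surface of revolution with the prescribed twist and mean curvature; the equivalence above then guarantees that every such surface arises this way.

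There is no real obstacle in this proof; the work has been done in the preceding lemmas and this corollary is a consolidation. The only mild subtlety is the observation that the first Codazzi equation is automatically satisfied once $H$ is constant, so that Lemma \ref{lem:eigen} by itself captures all of the Codazzi content, leaving exactly one scalar equation (Gauss) to impose on $\rho$.
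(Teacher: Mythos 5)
Your proposal is correct and follows essentially the same route as the paper's (very terse) proof: substitute the explicit eigenvalues from Lemma \ref{lem:eigen} into the Gauss equation of Lemma \ref{lem:gauss}, clear denominators to obtain Equation (\ref{eqn:rhoeqn}), and invoke Bonnet's theorem for realizability. Your added remarks that the first Codazzi equation is automatic once $H$ is constant and that each reduction is an equivalence are exactly the details the paper leaves implicit.
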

\begin{proof}
This follows by using the explicit solutions for $\lambda_1$ and $\lambda_2$ from Lemma \ref{lem:eigen} in the Gauss equation
from Lemma \ref{lem:gauss}, and simplifying.
\end{proof}

To classify all intrinsic surfaces of revolution, we would need to find all solutions to the differential equation \ref{eqn:rhoeqn}, and then to integrate the surface equation to obtain a parametrization. We will discuss the solutions of \ref{eqn:rhoeqn} for $H=0$ in Section \ref{sec:min}.

We end this section by carrying out the first integration step of the surface equation, which is quite explicit and shows that special coordinate curves are planar.

Assume that $\rho(u)$ is a solution of  \ref{eqn:rhoeqn}. To determine the surface parametrization, we will first determine a differential equation for the curve $\tilde c = f\circ c$ with $c(s) = (s,0)$.

Recall from Equations (\ref{eqn:UV}) and (\ref{lemma:levi}) that
\[
X(s) = U(s,0) \qquad\text{and}\qquad Y(s) = V(s,0)
\]
are a parallel frame field along $c(s)$ with respect to the first fundamental form. 

Following the proof of Bonnet's theorem, we derive a Frenet-type differential equation for the  orthonormal frame $\tilde X(s)= df X(s)$, $\tilde Y(s)= df Y(s)$, and $\tilde N(s) =\tilde X(s)\times \tilde Y(s)$. 

\begin{align*}
\tilde X'(s) {}&= df \Dds X(s) + \skp{\tilde X'(s)}{\tilde N(s)}\tilde N(s) \\
&= - \skp{\tilde X(s)}{\tilde N'(s)}\tilde N(s) \\
&= - \skp{df X(s)}{df S \frac{\partial}{\partial u}}\tilde N(s) \\
&= - \rho(s) I ( X(s), S X(s))\tilde N(s) 
\end{align*}

and similarly
\[
\tilde Y'(s) =   - \rho(s) I ( Y(s), S X(s))\tilde N(s)
\]

Finally,
\begin{align*}
\tilde N'(s) {}&=  \skp{\tilde N'(s)}{\tilde X(s)}\tilde X(s) + \skp{\tilde N'(s)}{\tilde Y(s)}\tilde Y(s)\\
&=  \skp{df S \frac{\partial}{\partial u}}{df X(s)}\tilde X(s) + \skp{df S \frac{\partial}{\partial u}}{df Y(s)}\tilde Y(s)\\
&=  I(S \frac{\partial}{\partial u}, X(s))\tilde X(s) + I(df S \frac{\partial}{\partial u},Y(s))\tilde Y(s)
\end{align*}

In our case, using the explicit formula for the shape operator and the principal curvatures in terms of $\rho$ and $a, b$, this simplifies to give the following lemma:

\begin{lemma}\label{lem:XYN}
\begin{align*}
\tilde X'(s) {}&= -\left(\frac{ e^{2 a s} b}{\rho (s)}+\frac12 H \rho (s)\right)\tilde N(s) \\
Y'(s) {}&=0 \\
\tilde N'(s) {}&= \left(\frac{ e^{2 a s} b}{\rho (s)}+\frac12 H \rho (s)\right)\tilde X(s) 
\end{align*}
\end{lemma}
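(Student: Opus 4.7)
The plan is to substitute the explicit data into the three Frenet-type formulas derived immediately before the lemma. The crucial simplification is that along the coordinate curve $c(s)=(s,0)$ we have $v=0$, and since $\alpha(v)=av$, this gives $\alpha(0)=0$ and $R^{\alpha(0)}=\mathrm{id}$. In particular, at points of $c$ the shape operator reduces to $S=\Lambda=\mathrm{diag}(\lambda_1(s),\lambda_2(s))$ with respect to the orthonormal frame $(X(s),Y(s))=(U(s,0),V(s,0))$, so $X$ and $Y$ are themselves the principal directions at $c(s)$ with $SX=\lambda_1 X$ and $SY=\lambda_2 Y$.

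With this in hand I would compute the four inner products appearing in the three formulas. Using that $(X,Y)$ is orthonormal for $I_\rho$, we get $I(X,SX)=\lambda_1(s)$ and $I(Y,SX)=0$, and since $\partial_u=\rho(s) X$ at $c(s)$, also $I(S\partial_u,X)=\rho(s)\lambda_1(s)$ and $I(S\partial_u,Y)=0$. Plugging these into the three formulas from the text yields
\[
\tilde X'(s) = -\rho(s)\lambda_1(s)\,\tilde N(s), \qquad \tilde Y'(s)=0,\qquad \tilde N'(s) = \rho(s)\lambda_1(s)\,\tilde X(s).
\]

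To finish I would invoke Lemma \ref{lem:eigen}, which gives $\lambda_1(s)=\tfrac12 H + b\,e^{2as}/\rho(s)^2$. Multiplying by $\rho(s)$ converts the coefficient $\rho(s)\lambda_1(s)$ into exactly $\tfrac12 H\rho(s) + b\,e^{2as}/\rho(s)$, matching the formulas in the statement.

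There is no real obstacle here; the only conceptual point is the observation that restricting to $v=0$ trivializes the twist rotation, so that the $u$-coordinate curve is a line of curvature and everything reduces to a direct substitution. The geometric consequence (which motivates the lemma) is that $\tilde Y$ is a constant vector, so the curve $\tilde c$ lies in the plane through $\tilde c(0)$ orthogonal to $\tilde Y$.
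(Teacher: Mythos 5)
Your proposal is correct and follows exactly the route the paper intends: the paper simply states that substituting the explicit shape operator and principal curvatures into the three Frenet-type formulas ``simplifies to give the following lemma,'' and your computation (using $\alpha(0)=0$ to diagonalize $S$ along $v=0$, then inserting $\lambda_1$ from Lemma \ref{lem:eigen}) is precisely that substitution carried out in detail.
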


\begin{corollary}
The space curve $f(s,0)$ is planar.
\end{corollary}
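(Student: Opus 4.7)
The plan is to read off planarity directly from Lemma \ref{lem:XYN}: the equation $\tilde Y'(s)=0$ says that $\tilde Y$ is a constant vector in $\R^3$ along the curve, and I will argue that the tangent vector of $\tilde c(s)=f(s,0)$ is always perpendicular to this fixed direction.

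First, I would compute $\tilde c'(s)$. Since $c(s)=(s,0)$ we have $c'(s)=\frac{\partial}{\partial u}$, and by the definition $U=\frac1{\rho}\frac{\partial}{\partial u}$ in Equation (\ref{eqn:UV}), this gives $c'(s)=\rho(s)U(s,0)$. Applying $df$ and using $\tilde X(s)=df\,U(s,0)$, I obtain $\tilde c'(s)=\rho(s)\tilde X(s)$. Because $\tilde X,\tilde Y,\tilde N$ form an orthonormal frame in $\R^3$ at every $s$, this tangent is orthogonal to $\tilde Y(s)$.

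Now I use Lemma \ref{lem:XYN}: $\tilde Y'(s)\equiv 0$ means $\tilde Y(s)\equiv \tilde Y_0$ for some fixed unit vector $\tilde Y_0\in\R^3$. Therefore
\[
\frac{d}{ds}\bigl\langle \tilde c(s),\tilde Y_0\bigr\rangle = \langle \tilde c'(s),\tilde Y_0\rangle = \rho(s)\,\langle \tilde X(s),\tilde Y(s)\rangle = 0,
\]
so $\langle \tilde c(s),\tilde Y_0\rangle$ is constant and $\tilde c$ lies entirely in the affine plane $\{x\in\R^3 : \langle x,\tilde Y_0\rangle = \langle \tilde c(0),\tilde Y_0\rangle\}$.

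There is essentially no obstacle here; all the work was already done in assembling Lemma \ref{lem:XYN}. The only thing worth emphasizing is the geometric meaning: the intrinsically parallel field $V$ along $c$ pushes forward to a genuinely constant vector field in $\R^3$, which is precisely the normal to the plane containing the profile curve $f(\,\cdot\,,0)$.
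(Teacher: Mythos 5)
Your proof is correct and follows the same route as the paper, which simply notes that planarity is immediate from the constancy of $\tilde Y(s)$ in Lemma \ref{lem:XYN}; you have merely spelled out the standard details (that $\tilde c'(s)=\rho(s)\tilde X(s)\perp \tilde Y_0$, so $\langle \tilde c(s),\tilde Y_0\rangle$ is constant). Your closing remark about why the argument needs $v=0$ matches the paper's own caveat.
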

\begin{proof}
This is immediate because $\tilde Y(s)$ is constant. Note that this only works because $v=0$.
\end{proof}

This is as far as we can get in the general case. For the minimal case, we will solve the Equation (\ref{eqn:rhoeqn}) explicitly and be able to integrate the surface equations further.

\section{The minimal case}\label{sec:min}

In the minimal case $H=0$ the differential equation for $\rho$ simplifies to

\begin{equation}
\rho '(u)^2-\rho (u) \rho ''(u)=-b^2 e^{4 a u}
\end{equation}

Without much loss of generality, we can assume $b=1$ by  scaling $\rho$ by a positive constant.
There is one exception, namely when $b=0$. In this case, $\lambda_1=\lambda_2=0$, so that the surface is a plane, which we disregard.

\begin{lemma}\label{lem:rhosol}
All positive solutions of 
\begin{equation}\label{eqn:minimal}
\rho '(u)^2-\rho (u) \rho ''(u)=- e^{4 a u}
\end{equation} defined in any open interval are given by
 \[
 \rho(u) = \frac{e^{2 a u}}{2 B} \left(A e^{B u} +\frac{e^{-B
   u}}{A}\right)
 \]
 for arbitrary $A,B>0$.
\end{lemma}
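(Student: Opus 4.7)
The plan is to reduce Equation~(\ref{eqn:minimal}) to an autonomous ODE via the substitution $\rho(u)=e^{2au}\sigma(u)$, integrate the reduced equation through a first integral, and then verify the resulting family is exhaustive. A direct computation---in which the $a$-dependent terms cancel cleanly---yields
\[
\rho'(u)^2-\rho(u)\rho''(u)=e^{4au}\bigl(\sigma'(u)^2-\sigma(u)\sigma''(u)\bigr),
\]
so Equation~(\ref{eqn:minimal}) is equivalent to $\sigma\sigma''-(\sigma')^2=1$, independent of $a$.

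To solve this reduced equation I would exhibit the conservation law
\[
\frac{d}{du}\!\left(\frac{(\sigma')^2+1}{\sigma^2}\right)=\frac{2\sigma'}{\sigma^3}\bigl(\sigma\sigma''-(\sigma')^2-1\bigr)=0,
\]
so $(\sigma')^2+1=B^2\sigma^2$ for a constant $B^2>0$, strictly positive because the $1/\sigma^2$ term contributes. The resulting separable first-order equation $(\sigma')^2=B^2\sigma^2-1$ is solved by the substitution $\sigma=B^{-1}\cosh\tau$ (legitimate since $B\sigma\geq 1$ along solutions), which reduces it to $\tau'=\pm B$, hence $\tau=\pm Bu+C_0$. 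Setting $A=e^{\pm C_0}>0$ and expanding $\cosh$ as a sum of exponentials gives $\sigma(u)=(2B)^{-1}(Ae^{Bu}+A^{-1}e^{-Bu})$, and back-substitution $\rho=e^{2au}\sigma$ yields the claimed formula.

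It remains to check that every positive solution of Equation~(\ref{eqn:minimal}) defined on any open interval is captured by this family. Direct differentiation shows each $(A,B)\in(0,\infty)^2$ produces such a solution, and the map from $(A,B)$ to the initial data $(\sigma(u_0),\sigma'(u_0))$ is a bijection onto its image: at $u_0=0$, for instance, it amounts to solving $A+A^{-1}=2B\sigma(0)$ and $A-A^{-1}=2\sigma'(0)$ uniquely for $A,B>0$, with positivity of $A$ following from the constraint $B\sigma(0)>|\sigma'(0)|$ forced by the first integral. Uniqueness for second-order ODEs then compels every positive solution to coincide with a member of the family. The one subtlety is that the substitution $\sigma=B^{-1}\cosh\tau$ is not a diffeomorphism at minima of $\sigma$ where $\sigma=B^{-1}$; this is easily bypassed by verifying the proposed formula satisfies the reduced equation directly, rather than tracking the substitution through such points.
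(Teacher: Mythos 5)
Your proposal is correct. The paper's proof is purely a verify-and-match argument: it checks by differentiation that the stated $\rho$ solves Equation~(\ref{eqn:minimal}), then shows that the map $(A,B)\mapsto(\rho(u),\rho'(u))$ hits every admissible initial datum by explicitly inverting it (the resulting formulas for $A$ and $B$ in terms of $\sigma(u)$, $\sigma'(u)$ and $e^{4au}$ are somewhat unwieldy), and concludes by local uniqueness for ODEs. You do two things differently. First, the substitution $\rho=e^{2au}\sigma$, under which the $a$-dependence cancels and the equation becomes the autonomous $\sigma\sigma''-(\sigma')^2=1$, is not in the paper; it both explains why the answer is ``$e^{2au}$ times a $\cosh$'' and makes the initial-data inversion at $u_0=0$ essentially trivial ($A-A^{-1}=2\sigma'(0)$ has a unique positive root, then $B$ follows), whereas the paper inverts the untransformed system. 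Second, you actually derive the solution via the first integral $((\sigma')^2+1)/\sigma^2=B^2$ and separation of variables, rather than producing it ex nihilo. That derivation has the usual defect of separation arguments at critical points of $\sigma$, which you correctly flag and correctly repair by falling back on direct verification plus Picard--Lindel\"of uniqueness (valid since $\sigma''=((\sigma')^2+1)/\sigma$ is smooth where $\sigma>0$) --- so the logical skeleton of your exhaustiveness step ends up identical to the paper's, with the derivation serving as motivation. The only cosmetic looseness is the phrase ``bijection onto its image'': what you need and what your explicit solving actually delivers is surjectivity onto all of $(0,\infty)\times\R$, and the restriction to $u_0=0$ is harmless because the reduced equation is autonomous and the family is invariant under translation (which replaces $A$ by $Ae^{Bu_0}$).
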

\begin{proof}
It is easy to check that $\rho$ satisfies Equation (\ref{eqn:minimal}).
To show that every local solution $\sigma$ is of this form, it suffices to show that for any fixed real $u$, the initial values $\sigma(u)>0$ and $\sigma'(u)$ are equal to the initial data $\rho(u)$ and $\rho'(u)$ for a suitable choice of $A>0$ and $B>0$. Then the local uniqueness theorem for ordinary differential equations implies that $\rho =\sigma$ near $u$ and hence everywhere.

To this end, we have to solve

\begin{align*}
\sigma (u) {}&=\frac1{{2 B}}{e^{2 a u} \left(e^{B u} A+\frac{e^{-Bu}}{A}\right)} {} \\
\sigma' (u) {}&= \frac1{{2 B}} {e^{2 a u} \left(A B e^{B u}-\frac{B e^{-B u}}{A}\right)}
+\frac1{B} {a e^{2 au} \left(e^{B u} A+\frac{e^{-B u}}{A}\right)}
   \end{align*}
for $A$ and $B$. Surprisingly, this is explicitly possible. 

The strategy is to solve the first equation for $A$, choosing the larger solution of the two.
This gives
\[
A=e^{-B u} \left(B e^{-2 a u} \sigma(u)+\sqrt{B^2 e^{-4 a
   u} \sigma(u)^2-1}\right) \ .
   \]
Inserting this into the second equation and simplifying gives
\[
\sigma'(u)- 2 a \sigma(u) = \sqrt{B^2  \sigma(u)^2- e^{4 a u}}
\]
which can be solved for $B$. Again choosing the positive solution gives
\[
B=\frac1{\sigma(u)}  \sqrt{e^{4 a u}+ (\sigma'(u) - 2 a \sigma(u))^2} \ .
\]
Note that $\sigma(u)>0$ as we are only interested in positive conformal factors. This in turn makes the radicand in the preliminary expression for $A$, and hence $A$ itself,  positive. Explicity: 
\[
A= e^{-\frac{u \left(2 a \sigma (u)+\sqrt{\left(\sigma '(u)-2 a \sigma
   (u)\right)^2+e^{4 a u}}\right)}{\sigma (u)}} \left(-2 a \sigma
   (u)+\sigma '(u)+\sqrt{\left(\sigma '(u)-2 a \sigma
   (u)\right)^2+e^{4 a u}}\right)\ .
  \]
\end{proof}

\begin{remark}
The Enneper solution $\rho_{Enn}$ corresponds to $a=1$, $A=B=1$.
\end{remark}

Using the solutions for $\rho$ from Lemma \ref{lem:rhosol} in Lemma \ref{lem:XYN} (and remembering that we normalized $b=1$), straightforward computations give

\begin{align*}
\tilde X'(s) {}&= -\frac{2 A B e^{B s} }{A^2e^{2 B s} +1}\tilde N(s) \\
\tilde Y'(s) {}&=0 \\
\tilde N'(s) {}&= \frac{2 A B e^{B s} }{A^2e^{2 B s} +1}\tilde X(s) 
\end{align*}

Integrating gives the following lemma:
\begin{lemma}
Up to a motion in space, the solution to this equation is given by
\[
\tilde X(s) = \frac1{1+e^{2 B s} A^2}\begin{pmatrix}1-A^2 e^{2 B s}\\ 0  \\ -2 A e^{B s}\end{pmatrix},\ 
\tilde Y(s) = \begin{pmatrix}0\\ 1  \\0\end{pmatrix},  \ 
\tilde N(s) = \frac1{1+e^{2 B s} A^2}\begin{pmatrix}2 A e^{B s}\\ 0  \\1-A^2 e^{2 B s}\end{pmatrix}
\]
\end{lemma}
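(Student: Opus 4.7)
The plan is to exploit the constancy of $\tilde Y$ to reduce the coupled system to a single scalar ODE, integrate in closed form, and then recover $\tilde X$ and $\tilde N$ by the half-angle formulas. Since $\tilde Y'(s)=0$, the vector $\tilde Y$ is constant; the ``motion in space'' freedom lets me rotate coordinates so that $\tilde Y(s)\equiv(0,1,0)$. Because $(\tilde X,\tilde Y,\tilde N)$ is a positively oriented orthonormal frame with $\tilde N=\tilde X\times\tilde Y$, both $\tilde X(s)$ and $\tilde N(s)$ must lie in the $xz$-plane and be orthonormal there, so there exists a $C^1$ function $\theta(s)$ with
\[
\tilde X(s)=\begin{pmatrix}\cos\theta(s)\\0\\-\sin\theta(s)\end{pmatrix},\qquad
\tilde N(s)=\begin{pmatrix}\sin\theta(s)\\0\phantom{+}\\\cos\theta(s)\end{pmatrix}.
\]
(The sign pattern is forced by requiring $\tilde N=\tilde X\times\tilde Y$.)

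Differentiating this ansatz gives $\tilde X'=-\theta'\tilde N$ and $\tilde N'=\theta'\tilde X$ identically, so the two remaining equations of the system collapse to the single scalar ODE
\[
\theta'(s)=\frac{2AB\,e^{Bs}}{A^2e^{2Bs}+1}.
\]
The substitution $w=Ae^{Bs}$ converts the right-hand side to $\frac{2\,dw}{1+w^2}$ and integrates at once to $\theta(s)=2\arctan(Ae^{Bs})+c$. The additive constant $c$ corresponds to a rotation about the $y$-axis, which is again absorbed in the allowed rigid motion, so I set $c=0$.

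Finally, the double-angle identities
\[
\cos(2\arctan x)=\frac{1-x^2}{1+x^2},\qquad \sin(2\arctan x)=\frac{2x}{1+x^2}
\]
with $x=Ae^{Bs}$ deliver the stated formulas. There is no real obstacle; the only thing that needs care is the sign choice for the square roots when identifying $\cos\theta$ and $\sin\theta$, but this is fixed once and for all by the orientation convention above. A brief verification that the resulting triple satisfies the orthonormality relations and the original Frenet-type system can be inserted as a sanity check if desired.
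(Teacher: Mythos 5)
Your proof is correct and follows the same route the paper intends: the paper simply says ``Integrating gives the following lemma'' without detail, and your reduction to the scalar ODE $\theta'(s)=\tfrac{2ABe^{Bs}}{A^2e^{2Bs}+1}$ via the planar angle function, followed by the substitution $w=Ae^{Bs}$ and the double-angle identities, is exactly that integration carried out explicitly. Your choice $c=0$ also reproduces the paper's normalization $\tilde X\to(1,0,0)$, $\tilde N\to(0,0,1)$ as $s\to-\infty$, and the orientation check $\tilde N=\tilde X\times\tilde Y$ is handled correctly.
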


We have normalized the frame to that for $s=-\infty$, $\tilde X = (1,0,0)$ and $\tilde N = (0,0,1)$ in agreement with our parametrization of the Enneper surface.

\begin{corollary}
The space curve $\tilde c(s)=f(s,0)$ is given by
\[
\tilde c(s)=-\frac{e^{2 a s}}{2 B} \left(\frac{e^{B s} A}{B+2 a}+\frac{e^{-B s}}{A(B-2 a)},0,\frac{1}{a}\right)
\]
if $B \ne \pm2a$. If $B=2a$ (say, the other case being similar), we have
\[
\tilde c(s)=-\frac{e^{2 a s}}{4 a^2}\left(
\frac14 A e^{2 a s},0,1\right) +\frac{s}{4aA} (1,0,0)\ .
\]
\end{corollary}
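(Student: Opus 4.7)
The plan is to integrate the equation $\tilde c'(s) = df(c'(s))$, where $c(s)=(s,0)$. Since the curve $c(s)$ moves only in the $u$-direction at unit speed scaled by $\rho(s)$, namely $c'(s) = \partial/\partial u|_{(s,0)} = \rho(s)\, U(s,0)$, applying $df$ yields the identity
\[
\tilde c'(s) = \rho(s)\,\tilde X(s).
\]
So the corollary reduces to explicitly integrating the vector-valued function $\rho(s)\tilde X(s)$ using the formula for $\rho$ from Lemma \ref{lem:rhosol} and the formula for $\tilde X$ from the preceding lemma.

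The central observation is that the denominator $1+A^2 e^{2Bs}$ appearing in $\tilde X(s)$ is, up to an explicit exponential factor, the reciprocal of $\rho(s)$. Indeed,
\[
\rho(s) = \frac{e^{2as}}{2B}\left(A e^{Bs}+\frac{e^{-Bs}}{A}\right) = \frac{e^{(2a-B)s}}{2AB}\bigl(1+A^2 e^{2Bs}\bigr),
\]
so multiplying by $\tilde X(s)$ collapses each nonzero coordinate of $\rho(s)\tilde X(s)$ into a short sum of pure exponentials. A quick computation gives
\[
\rho(s)\tilde X(s) = \left(\tfrac{e^{(2a-B)s}}{2AB}-\tfrac{A\, e^{(2a+B)s}}{2B},\;0,\;-\tfrac{e^{2as}}{B}\right).
\]

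From here the integration is immediate. In the generic case $B\neq \pm 2a$ both exponents $2a\pm B$ are nonzero, and antidifferentiating term by term yields exactly the claimed closed-form expression for $\tilde c(s)$ (up to a constant vector, which accounts for the freedom of a translation in space — the "up to a motion" clause). In the degenerate case $B = 2a$, the exponent $2a-B$ vanishes so that the first term in the first coordinate is a constant $1/(4aA)$, whose antiderivative is the linear function $s/(4aA)$; the second exponential term integrates normally to produce $-Ae^{4as}/(16 a^2)$, matching the displayed expression. The case $B=-2a$ is completely analogous with the roles of the two exponentials swapped.

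There is no real obstacle here — the entire argument is a direct computation. The only point requiring mild care is matching the integration constants with the normalization of the initial frame already chosen (so that at $s = -\infty$, $\tilde X=(1,0,0)$ and $\tilde N=(0,0,1)$), but since these affect only a rigid motion of $\R^3$, they can be absorbed freely, which is what the statement "up to a motion in space" allows.
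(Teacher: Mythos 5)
Your proposal is correct and follows the paper's own route: the paper likewise reduces to $\tilde c'(s)=I(\tfrac{\partial}{\partial s},X(s))\tilde X(s)+I(\tfrac{\partial}{\partial s},Y(s))\tilde Y(s)=\rho(s)\tilde X(s)$, simplifies this to the same sum of pure exponentials, and integrates term by term, with the $B=2a$ case producing the linear term exactly as you describe. The only cosmetic difference is that you make the cancellation of the factor $1+A^2e^{2Bs}$ explicit, which the paper leaves as ``simplifying.''
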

\begin{proof}
This follows by integrating 
\begin{align*}
\tilde c'(s) & =\frac{d}{ds} f(s,0) \\
{}&= I(\frac{\partial}{\partial s}, X(s)) \tilde X(s) + I(\frac{\partial}{\partial s}, Y(s)) \tilde Y(s) \\
&=-\frac{e^{2 a s}}{2 B} \left(A e^{B s}-\frac{e^{-B s}}{A},0,2\right) 
\end{align*}
using the previous lemma, and simplifying.
\end{proof}

Instead of now integrating the surface equations likewise along the curves $s\mapsto (u,s)$ for fixed $u$, we will use the 
Bj\"orling formula \cite{dhkw} to obtain the parametrization more easily.

Recall that given a real analytic curve $\tilde c:(u_1,u_2)\to \R^3$ and a real analytic unit normal field $\tilde N:(u_1,u_2)\to \R^3$ satisfying $\skp{\tilde c'(u)}{\tilde N(u)}=0$, the unique minimal surface containing $\tilde c$ and having surface normal $\tilde N$ along $\tilde c$ can be given in a neighborhood of $(u_1,u_2) \subset \C$ by
\[
f(z) = \re \left( \tilde c(z) - i \int^z \tilde N(w)\times \tilde c'(w)\, dw \right)
\]
where we write $z=u+ i v$ and have extended $\tilde c$ and $\tilde N$ to holomorphic maps into $\C^3$.

In our case, we obtain  for $B\ne 2a$

\[
f(u,v) = \frac{e^{2 a u}}{2 B}
\begin{pmatrix}
\frac{e^{-B u} \cos ((2 a-B) v)}{2 a A-A
   B}-\frac{A e^{B u} \cos ((2 a+B) v)}{2
   a+B} \\
 \frac{e^{-B u} \sin ((2 a-B) v)}{2 a A-A
   B}+\frac{A e^{B u} \sin ((2 a+B) v)}{2
   a+B} \\
 -\frac{\cos (2 a v)}{a}
\end{pmatrix}
\]
and   for $B=2a$
\[
f(u,v) = \frac{1}{4a^2}
\begin{pmatrix}
\frac{a u}{A}-\frac{1}{4} A e^{4 a u}
   \cos (4 a v) \\
 \frac{a v}{A}+\frac{1}{4} A e^{4 a u}
   \sin (4 a v) \\
 -e^{2 a u} \cos (2 a v)  \end{pmatrix} \ .
\]

Note that in the last case scaling $a$ by  a constant and $(u,v)$ by the reciprocal only scales the surface, so we can as well assume that $a=1$ in this case.

The Weierstrass data \cite{dhkw} of these surfaces are particularly simple. Using  $z = u+i v$, 
let (also for $B=2a$)  
\[
G(z) =\frac{1}{A} e^{-B z}\qquad\text{and}\qquad dh  = -\frac{1}{B} e^{2 a z} \, dz \ .
\]

be the Gauss map and height differential of the Weierstrass representation formula

\[
f(z) = \re \int^z \begin{pmatrix} \frac12(1/G-G) \\ \frac{i}2 (1/G+G) \\ 1 \end{pmatrix}\, dh \ .
\]

This gives the surfaces $f(u,v)$ above. This can be verified either by evaluating the integral or by solving the Bj\"orling integrand
$\tilde c'(z) - i  \tilde N(z)\times \tilde c'(z) $ for $G$ and $dh$.

Of particular interest are the cases when $B$ and $2a$ are integers.
Then the substitution $z = -\log (w)$ changes the Weierstrass data into 

\[
G(z) =\frac{1}{A} w^{B}\qquad\text{and}\qquad dh  = \frac{1}{B} w^{-2 a-1} \, dw \ ,
\]
defined on the punctured plane $\C^*$ and being minimal surfaces of finite total curvature.

A substitution in the domain of the form $w \mapsto \lambda w$ will scale $G$ and $dh$ by powers of $\lambda$, so we can assume without loss of generality that $A=1$.

Some of the minimal surfaces we have obtained are described in \cite{ka5}.
We will now discuss examples.

\begin{figure}[H] 
   \centering
   \includegraphics[width=3in]{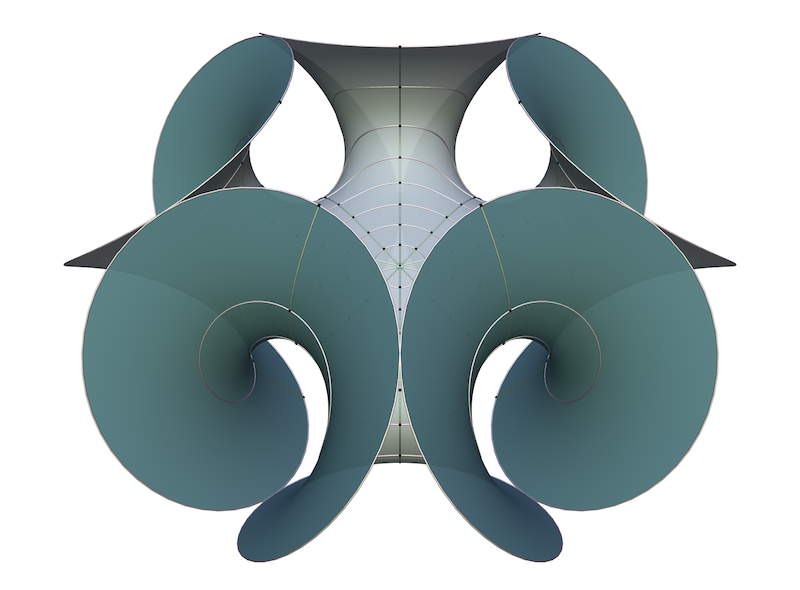} 
   \caption{The Enneper Surface of order 5}
   \label{fig:enneper5}
\end{figure}

In case that $B=2a-1\in\N$, we obtain the Enneper surfaces of cyclic symmetry of order $B+1$, see Figure \ref{fig:enneper5}.
For $B=2a-1=1$, we obtain the original Enneper surface.

The {\em planar Enneper surfaces} of order $n$ are given by choosing $B=n+1$ and $2a=n$.
See Figure \ref{fig:PlanarEnneper} for the cases $n=1$ and $n=6$. These surfaces feature an Enneper type end and a planar end.
Remarkably, in the non-zero CMC case, there are only one-ended solutions \cite{smy1}.

\def\fw{2.5in}
\begin{figure}[H]
 \begin{center}
   \subfigure[order 1]{\includegraphics[width=\fw]{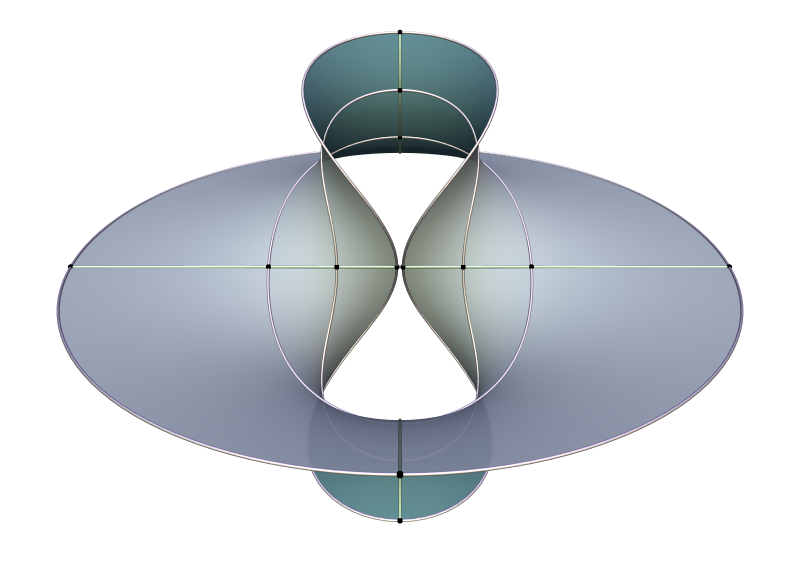}}
   \subfigure[order 6]{\includegraphics[width=\fw]{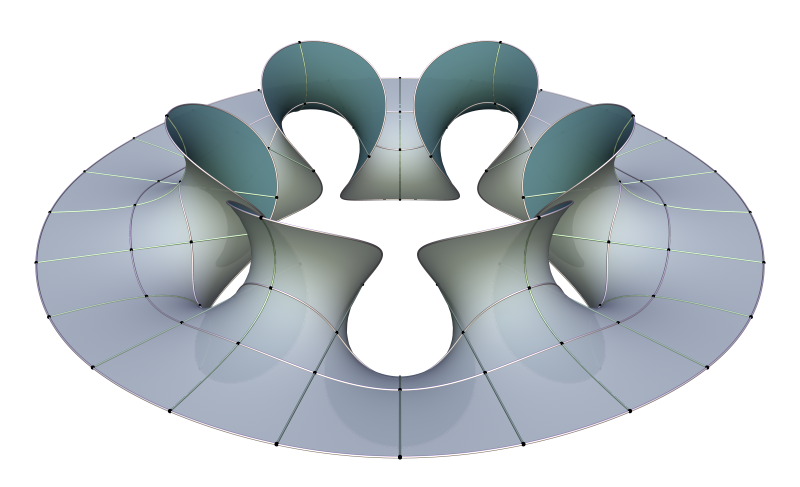}}
 \end{center}
 \caption{Planar Enneper surfaces}
 \label{fig:PlanarEnneper}
\end{figure}

Other choices of $a$ and $B$ lead to more wildly immersed examples. In Figure \ref{fig:GeneralEnneper} we show images of thin annuli $u_1<u<u_1$.

\def\fw{2.5in}
\begin{figure}[H]
 \begin{center}
   \subfigure[$B=1$, $a=3/2$]{\includegraphics[width=\fw]{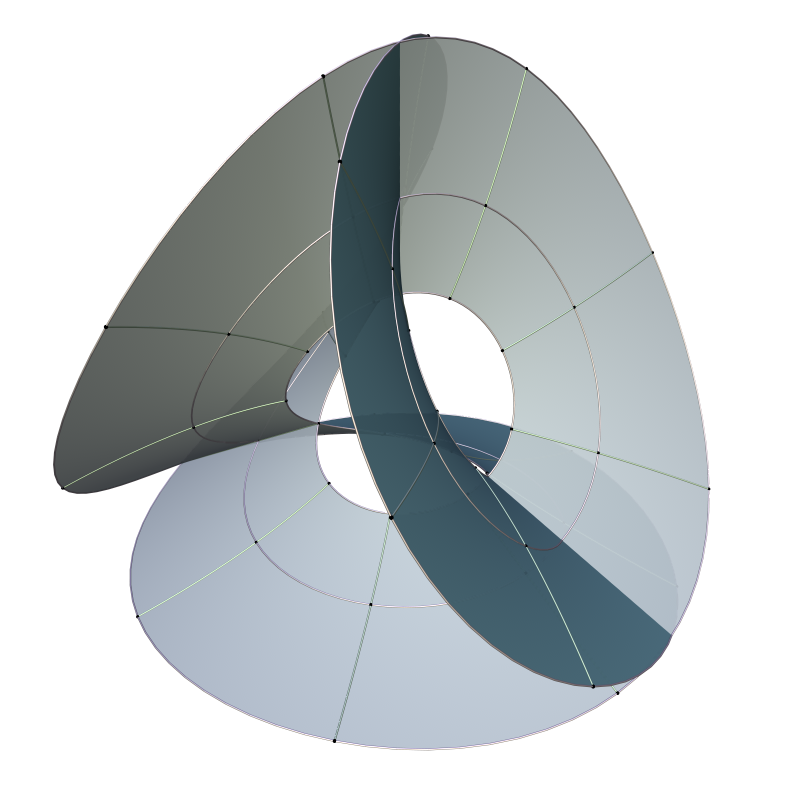}}
   \subfigure[$B=7$, $a=2$]{\includegraphics[width=\fw]{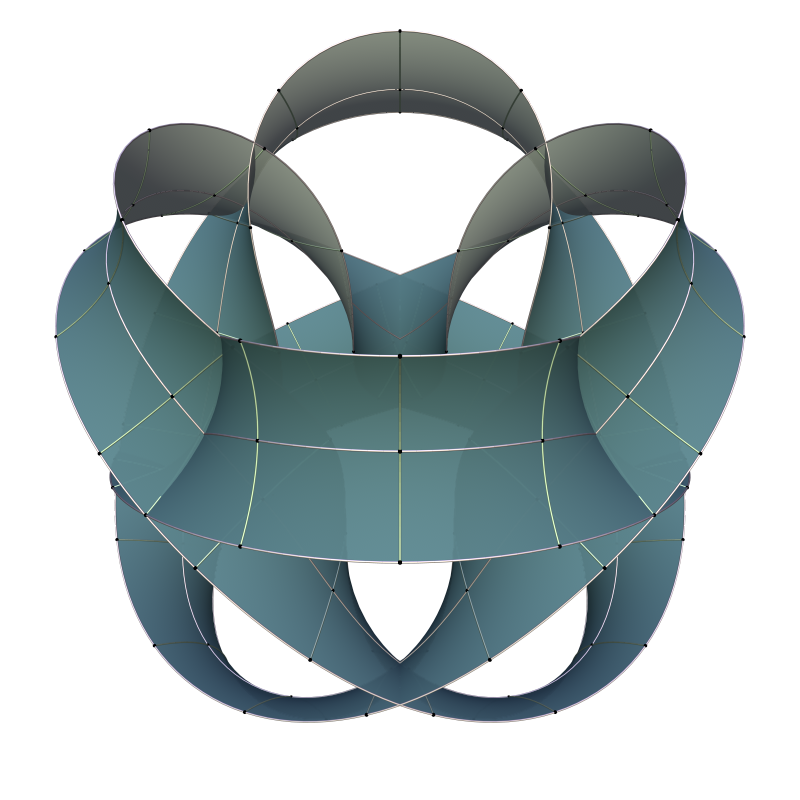}}
 \end{center}
 \caption{Generalized Enneper surfaces}
 \label{fig:GeneralEnneper}
\end{figure}

There is one  case that deserves attention: If $B=2a$, the Weierstrass 1-forms  have residues, and hence the surface can become periodic.
The prototype case here is $B=1$ and $a=1/2$ (see Figure \ref{fig:transenneper}) which leads to a translation invariant surface  that hasn't made it into the literature to our knowledge.
It deserves attention because it is in the potentially classifiable list of minimal surfaces in the space form $\R^3/\Z$ (where $\Z$ acts through a cyclic group of translations)
of finite total curvature $-4\pi$. Other surfaces in this list include the helicoid and the singly periodic Scherk surfaces.

\begin{figure}[H] 
   \centering
   \includegraphics[width=5in]{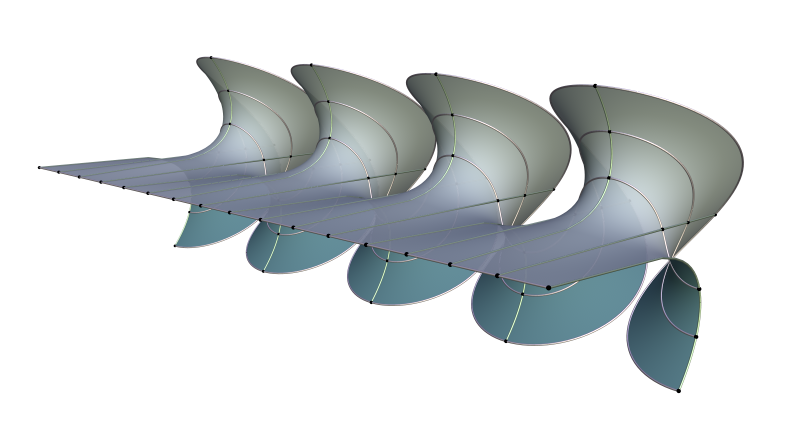} 
   \caption{The Translation Invariant Enneper Surface}
   \label{fig:transenneper}
\end{figure}

\section{Constant Mean Curvature}\label{sec:cmc}

\author{intrinscial?}
In \cite{smy1}, Smyth considers intrinsical surfaces of revolution under a different viewpoint: He assumes from the beginning that his surfaces have constant mean curvature, but does not make further assumptions about the shape operator. Nevertheless, we both end up with the same class of surfaces. Therefore we would like to connect our approach with Smyth's in the CMC case.

First we can compute the Hopf differential using the coordinate $z=u+i v$: Using the Definitions \ref{def:rho} of $I$ and \ref{def:shape} of $S$, and the formulas for $\alpha$, $\lambda_1$ and $\lambda_2$ from Theorem \ref{thm:ode}, a straightforward computation shows that

\begin{align*}
\Omega &= I\left(S\cdot \frac{d}{dz}, \frac{d}{dz}\right)\\
&= I\left(S\cdot \frac12\begin{pmatrix}1\\-i\end{pmatrix}, \frac12\begin{pmatrix}1\\-i\end{pmatrix}\right) \\
&= \frac12 b e^{2a u}\left( \cos(2av)+ i \sin(2a v)\right)\\
&= \frac12 b e^{2az}
\end{align*}

is indeed holomorphic and agrees with Smyth's computation. Secondly, to show that our equation for $\rho$ is equivalent to Smyth's equation, we substitute
\begin{align*}
\rho(u) &= e^{\phi(u)/2} \\
\phi(u) &= F(u) -2au+\log(b)
\end{align*}
and obtain
\[
F''(u) = -4 b e^{-2 a u} \sinh (F(u))
\]
in the case that $H=2$ (which is Smyth's case $H=1$). This again agrees with Smyth's equation, up to a normalization of constants.

In general, there are apparently no explicit solutions to Equation (\ref{eqn:rhoeqn}) for $H\ne 0$ in the literature. There is, however, one explicit solution given by 
\[
\rho(u) = \frac{\sqrt{2} \sqrt{b} e^{a u}}{\sqrt{H}} \ .
\]
By Lemma \ref{lem:eigen}, the principal curvatures become simply $\lambda_1 = H$ and $\lambda_2=0$. This implies that the surface under consideration is in fact a cylinder. This is somewhat surprising, as the standard parametrization of a cylinder over a circle of radius $1/H$ as an extrinsic  surface of revolution has twist 0. In our case, however, the cylinder is parametrized using geodesic polar coordinates (see the left image in Figure \ref{fig:cmc}) as 

\[
f(u,v) = \frac1H
\begin{pmatrix}
 \cos \left(\frac1a {\sqrt{2bH}  e^{a u} \cos (a v)}\right) \\
\sin \left(\frac1a {\sqrt{2b H}  e^{a u}  \cos(a v)}\right) \\
 \frac1a {\sqrt{2b H}  e^{a u}  \sin(a v)}\\
 \end{pmatrix}
\]

For other initial data of the Equation (\ref{eqn:rhoeqn}), only numerical solutions are available. These can be obtained easily by integrating the surface equations.
The  right image in Figure \ref{fig:cmc} was obtained using $a=1$, $b=4.2625$, $H=1/2$, and $\rho(0) =\rho'(0)=2$.

\def\fw{2.5in}
\begin{figure}[H]
 \begin{center}
   \subfigure[Cylinder in polar coordinates]{\includegraphics[width=\fw]{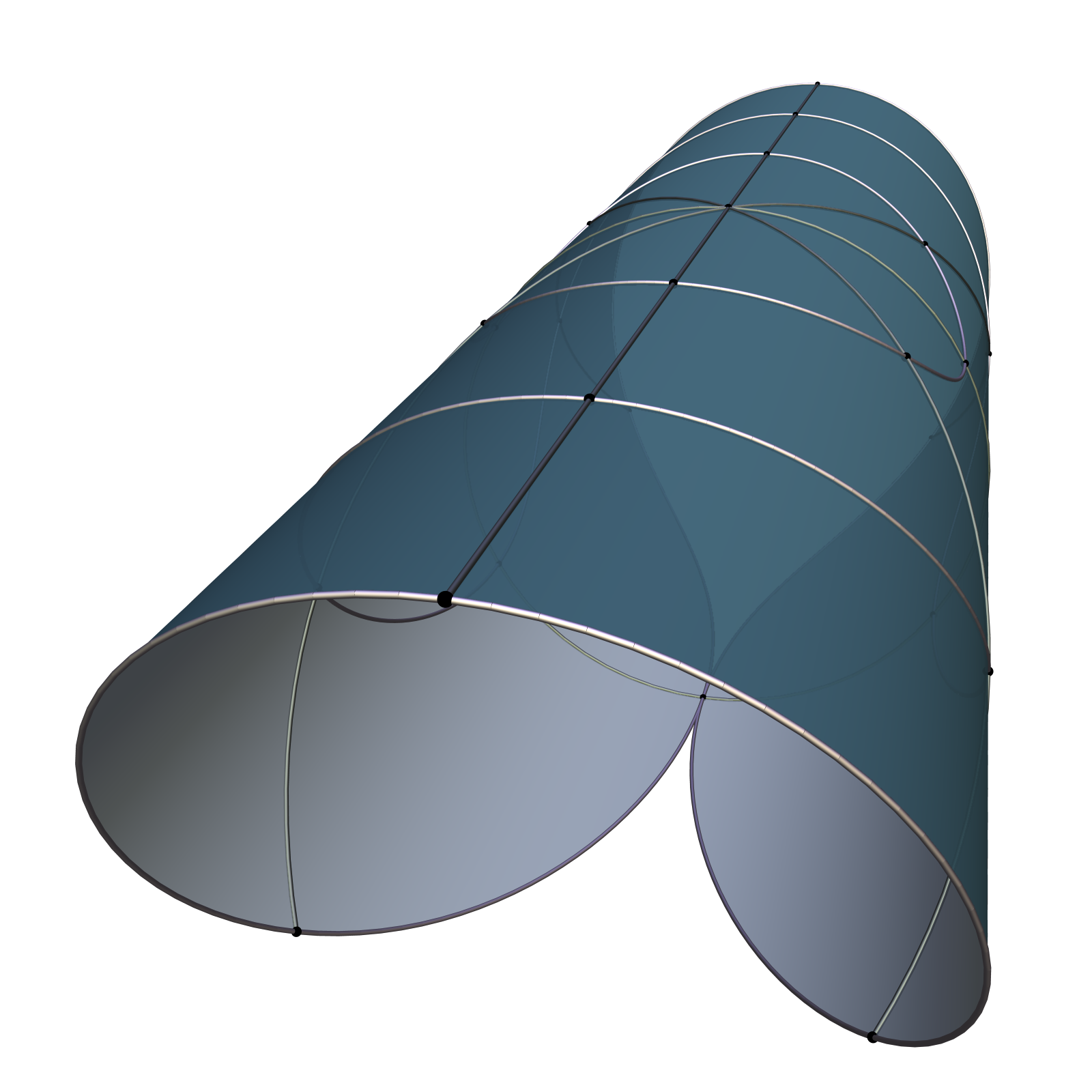}}
   \subfigure[Intrinsic CMC surface of revolution (numerical solution)]{\includegraphics[width=\fw]{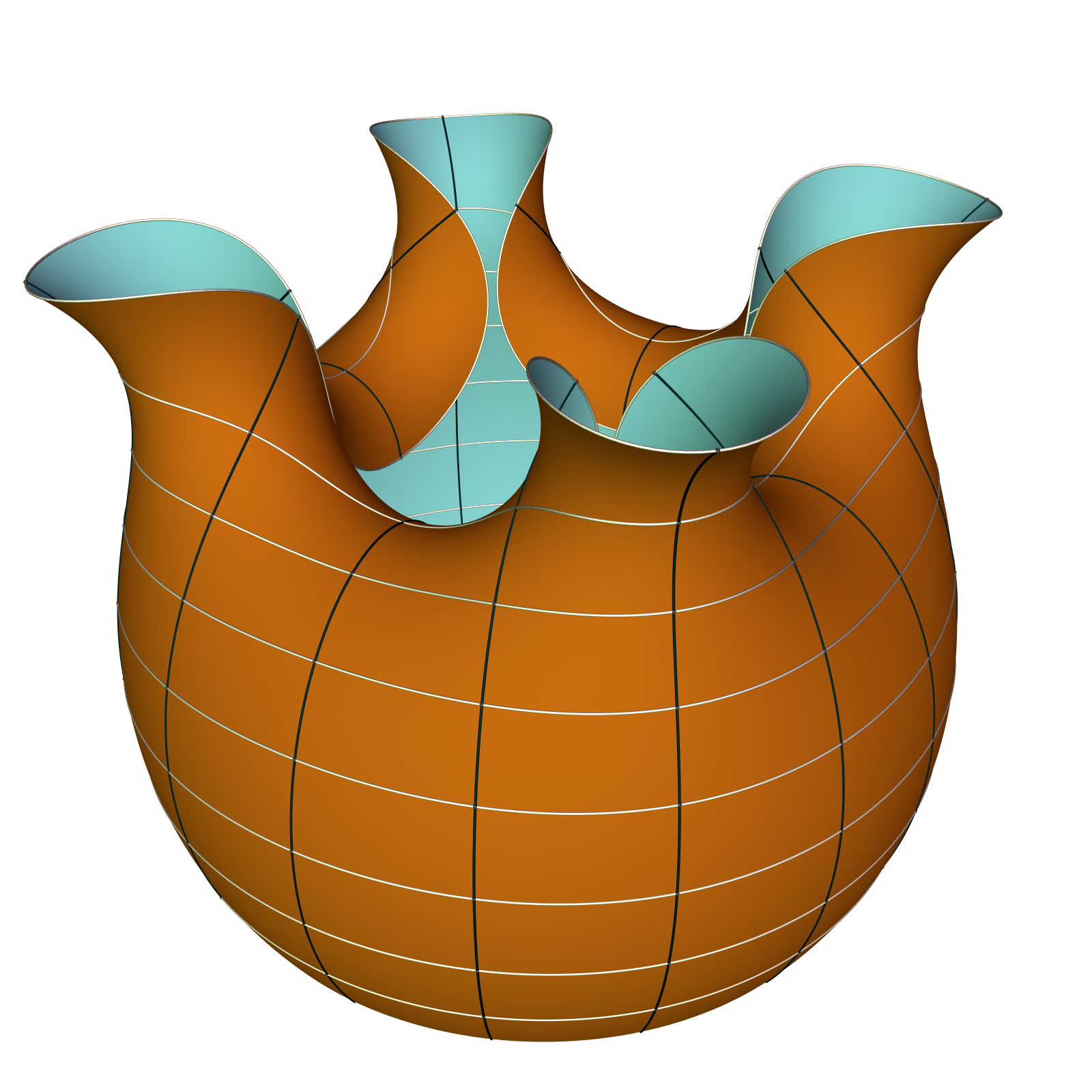}}
 \end{center}
 \caption{Two CMC surfaces}
 \label{fig:cmc}
\end{figure}

\section{The untwisted case}\label{sec:unt}

In this section, we will consider the exceptional case of Theorem \ref{thm:cmc} where $\alpha(v)=a$ with $a$ being an integral multiple of $\pi/2$, and prove Theorem \ref{thm:unt}.

Thus we are given a first fundamental form $I_\rho$ and shape operator

\begin{equation*}
S = \pm\begin{pmatrix}
\lambda_1(u) & 0 \\
 0 & \lambda_2(u) 
\end{pmatrix}  \qquad \text{or} \qquad
S = \pm\begin{pmatrix}
\lambda_2(u) & 0 \\
 0 & \lambda_1(u) 
\end{pmatrix}\ ,
\end{equation*}
depending on the congruence class of $a$ modulo $2\pi$. Without loss of generality, we will assume $a=0$ and therefore
\begin{equation*}
S = \begin{pmatrix}
\lambda_1(u) & 0 \\
 0 & \lambda_2(u) 
\end{pmatrix} \ .
\end{equation*}

The Gauss- and Codazzi equations become
\[
\lambda_1(u)\lambda_2(u) = \frac{\rho'(u)^2-\rho(u)\rho''(u)}{\rho(u)^4}
\]
and
\[
\frac{\rho'}\rho (\lambda_1-\lambda_2) {}=  \lambda_2'  \ .
\]

Eliminating $\lambda_1$ from the first equation using the second equation leads to the differential equation 
\[
\frac{\rho '(u)^2-\rho (u)
   \rho ''(u)}{\rho(u)^4}=\lambda_2(u) \left(\lambda_2(u)+\frac{\rho (u)
   \lambda_2'(u)}{\rho '(u)}\right)
 \]
 for $\lambda_2$. Surprisingly, this equation can be solved explicitly by

\begin{align*}
\lambda_1(u) {}&=\frac{\rho (u) \rho''(u)-\rho'(u)^2}{\rho (u)^2 \sqrt{c^2
   \rho (u)^2-\rho '(u)^2}}\\
   \lambda_2(u)  {}&=-\frac{\sqrt{c^2 \rho (u)^2-\rho'(u)^2}}{\rho (u)^2}
   \end{align*}
   for any choice of $c$ that makes the radicand positive.
   
 We now show that any untwisted surface is a general surface of revolution.  Recall that  typically a surface of revolution is being parametrized as
 \[
 f(u,v)=(g(u) \cos (v),g(u) \sin (v),h(u)) \ .
 \]
 
 However, by changing the speed of rotation, a surface of revolution can also be given by
  \[
 f(u,v)=(g(u) \cos (cv),g(u) \sin (cv),h(u))
 \]
 
 where $c$ is a positive constant.

 We now show that we can find $g$ and $h$ defined on the interval $(u_1, u_2)$ having the first fundamental form and shape operator of the untwisted intrinsic surface of revolution above, with the rotational speed-up $c$ being the constant $c$ in Theorem \ref{thm:unt} introduced above as an integration constant.
 
 The first fundamental form  of $f$ is given by:

  \[ 
I = \begin{pmatrix}
g'(u)^2+h'(u)^2 & 0 \\
 0 & c^2g(u)^2 
\end{pmatrix} \ .
 \]
 
 Comparing this to the definition of $I_\rho$ gives the following equations:
 
 \begin{align*}
 g'(u)^2+h'(u)^2 &= \rho(u)^2\\
  c^2 g(u)^2 &= \rho(u)^2 \ .
  \end{align*}
 
This determines  $g(u) = \frac{\rho(u)}{c}$ and $h(u)$ by  $h'(u)=\frac1c \sqrt{c^2\rho (u)^2-\rho'(u)^2}$. Note that the radicand is positive by our assumption about $c$.

Straightforward computation shows that the shape operator of $f(u,v)$ with $g$ and $h$ as above coincides with the shape operator $S$ of the intrinsic surface of revolution.

This completes the proof of Theorem \ref{thm:unt}.

 \begin{example} 
Knowing this, we can find  surfaces of revolution with speed-up $c\ge 3$ that are locally  isometric to the Enneper surface.    

For the Enneper surface, we have

\[
\rho(u) = \frac{1}{4} e^{2 u} \left(e^{-u}+e^u\right)
\] 
so that the radicand $c^2\rho (u)^2-\rho'(u)^2$ becomes
\[
\frac{1}{16} e^{2u}u \left(
\left(c^2-9\right) e^{4 u}
+\left(2 c^2-6\right) e^{2u}+
c^2-1
  \right) \ .
\]
Thus for $c\ge 3$, we can find $g$ and $h$ as needed. The integral for $h$ is generally not explicit, but for $c=3$ we can obtain

\begin{align*}
g(u) &= \frac{1}{12} e^{2 u} \left(e^{-u}+e^u\right) \\
h(u) &= \frac{1}{36} \left(2 \sqrt{3} \sinh
   ^{-1}\left(\sqrt{\frac{3}{2}} e^u\right)+3 e^u
   \sqrt{2+3 e^{2 u}}\right) \ . 
\end{align*}
  
This means that the surface of revolution in Figure \ref{fig:ennperrevolve} is isometric to one third of the Enneper surface, punctured at the ``center''.

\begin{figure}[H] 
   \centering
   \includegraphics[width=5in]{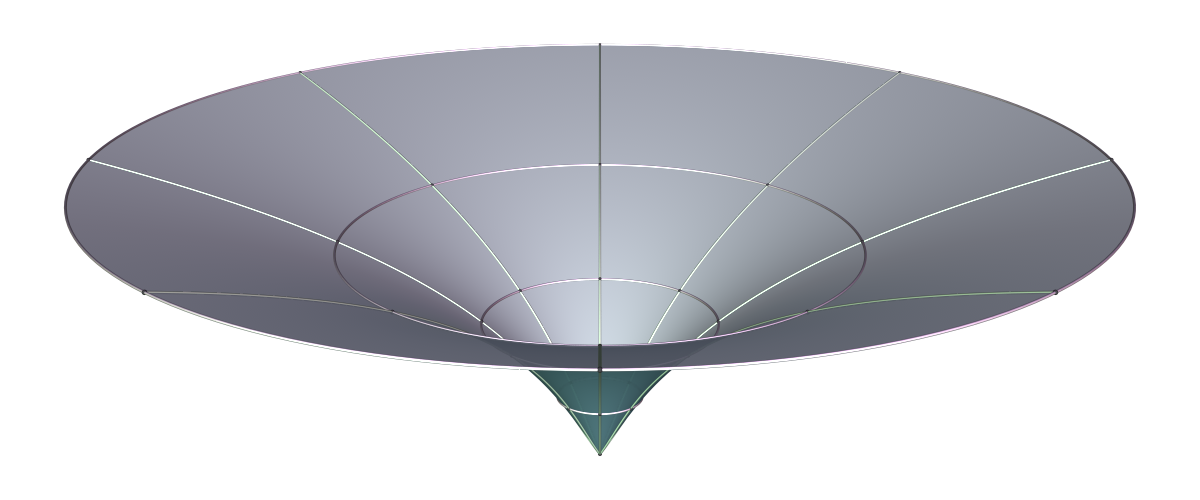} 
   \caption{Surface of revolution isometric to one third of the  Enneper Surface}
   \label{fig:enneperrevolve}
\end{figure}

In contrast, if $c=1$, the radicand is negative for all $u$, which implies that no piece of the Enneper surface can be isometrically realized as a standard surface of revolution (with no speed-up).
 \end{example}

\bibliographystyle{plain}
\bibliography{bibliography}

\end{document}